\newtheorem{Theorem}{\bf Theorem}
\newtheorem{lemma}[Theorem]{\bf Lemma}
\newtheorem{definition}[Theorem]{\bf Definition}
\newtheorem{remark}[Theorem]{\bf Remark}
\newtheorem{theorem}[Theorem]{\bf Theorem}
\newtheorem{notation}[Theorem]{\bf Notation}
\DeclareMathOperator*{\dprime}{\prime \prime}
\def\ss{\subseteq}
\def\qed{\hfill$\Box$}
\def\scfig #1 #2 {\resizebox{#2}{!}{\includegraphics{#1}}}
\newcommand{\be}{\begin{equation}}
\newcommand{\ee}{\end{equation}}
\numberwithin{equation}{section}
\begin{document}

\title[Commuting squares and planar subalgebras]{Commuting squares and planar subalgebras}

\author{Keshab Chandra Bakshi}
\address{Chennai Mathematical Institute, Chennai, India}
\author{Vijay Kodiyalam}
\address{The Institute of Mathematical Sciences, Chennai, India and Homi Bhabha National Institute, Mumbai, India}
\email{bakshi209@gmail.com,vijay@imsc.res.in}

\begin{abstract}
We show a close relationship between non-degenerate smooth commuting squares 
of $II_1$-factors with all inclusions of finite index and inclusions of subfactor planar
algebras by showing that each leads to a construction of the other.
One direction of this uses the Guionnet-Jones-Shlyakhtenko construction.
\end{abstract}

\subjclass[2010]{Primary 46L37}
\date{June 14, 2019.}
\keywords{Subfactor, commuting square, planar algebra, Guionnet-Jones-Shlyakhtenko construction}

%
%
%
%
%
%
%
\maketitle

\section{Introduction}

The goal of this paper is to establish a relationship between $*$-planar subalgebras of a planar algebra
and commuting squares. While results of this nature have been known for a while in the language of
standard $\lambda$-lattices - see \cite{Ppa1995} and \cite{Ppa2002} - we employ the Guionnet-Jones-Shlayakhtenko (GJS) construction - see \cite{GnnJnsShl2010} and \cite{JnsShlWlk2010} - to simplify the proofs considerably. Needless to say, the proofs are very
pictorial.

We begin with a review of the GJS construction in the version described in \cite{KdySnd2009} in \S2.
In \S 3 we start with a non-degenerate commuting square of $II_1$-factors
$$
\begin{matrix}
L &\subseteq & M\cr
\rotatebox{90}{$\subseteq$} &\ &\rotatebox{90}{$\subseteq$}\cr
N &\subseteq & K
\end{matrix}
$$ 
with all inclusions extremal of finite index and 
that is smooth in the sense of \cite{Ppa1994} and then show that the planar algebra of $N \subseteq K$ is
a $*$-planar subalgebra of that of $L \subseteq M$. In \S 4, we establish two `algebra to analysis' results. The first deals with going from an inclusion of finite pre-von Neumann algebras to the associated inclusion of their
von Neumann algebra completions. The second  deals with a sufficiently nice square of finite pre-von Neumann algebras and their corresponding completions, which are shown to form a commuting square. In \S 5, beginning with a $*$-planar subalgebra $Q$ of a subfactor planar algebra $P$, we appeal to the GJS construction to obtain a smooth non-degenerate commuting square, as above, such that the planar algebras of $N \subseteq K$ and $L \subseteq M$ are identified with $Q$ and $P$ respectively.

\section{Basics of GJS-construction}

In this section we recall the GJS construction from \cite{KdySnd2009}. Throughout this section, $P$ will be a subfactor planar algebra of modulus $\delta > 1$. The GJS construction associates to $P$, a basic construction tower $M_0 = N \subseteq M = M_1 \subseteq M_2 \subseteq \cdots$ of $II_1$-factors
with all inclusions extremal of finite index $\delta^2$	 and such that $P$ is the planar algebra of $N \subseteq M$. For all preliminary material on planar algebras we refer the reader to \cite{Jns1999} and for any unspecified notation to \cite{KdySnd2009}.

Let $P$ be a subfactor planar algebra of modulus $\delta >1.$  Therefore, we have finite-dimensional $C^*$-algberas
$P_n$ for $n\in Col$ with appropriate inclusions between them. For $k\geq 0$, let $F_k(P)$ be the vector space direct sum ${\oplus}_{n=k}^{\infty} P_n$ (where $0=0_{+}$, here and in the sequel)
. A typical element $a\in F_k(P)$ looks like $a=\sum_{n=k}^{\infty} a_n= (a_k,a_{k+1},\cdots)$, here of course
 only finitely many $a_n$'s are non-zero. According to \cite{KdySnd2009}, each $F_k(P)$ is equipped with a filtered, associative, unital $*$-algebra structure with normalised trace $t_k$ and there are trace preserving filtered $*$-algebra inclusions $F_0(P)\subseteq F_1(P)\subseteq F_2(P)\subseteq \cdots,$
as well as conditional expectation-like maps $F_k(P) \rightarrow F_{k-1}(P)$. 
Since we will need these structures explicitly in this paper, we briefly recall them.
%
First, note that an arbitrary element $a\in P_m \subseteq F_k(P)$ is depicted as in Figure \ref{fig:elt}:
\begin{figure}[!h]
\begin{center}
\psfrag{k}{\tiny $k$}
\psfrag{a}{\small $a$}
\psfrag{2n-2k}{\tiny $2m-2k$}
\includegraphics[scale=.55]{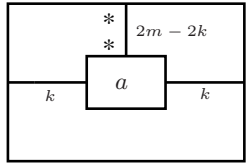}
\end{center}
\caption{Arbitrary element $a\in P_m \subseteq F_k(P)$}
\label{fig:elt}
\end{figure}
\bigskip
 
\textit{Multiplication:}  Consider two elements $a = a_m\in P_m\subseteq F_k(P)$ and $b = b_n\in P_n \subseteq F_k(P)$ for $m,n\geq k$. Then the mutiplication in $F_k(P)$, denoted by $(a\#b)$, is defined to be
$\sum_{t=\lvert n-m\rvert +k}^{n+m-k} (a\#b)_t$, where $(a\#b)_t$ is defined as in Figure \ref{fig:mult}.
\begin{figure}[!h]
\begin{center}
\psfrag{k}{\tiny $k$}
\psfrag{a}{\small $a$}
\psfrag{b}{\small $b$}
\psfrag{1}{\tiny $m+t-n-k$}
\psfrag{2}{\tiny $m+n-k-t$}
\psfrag{3}{\tiny $n+t-m-k$}
\includegraphics[scale=.7]{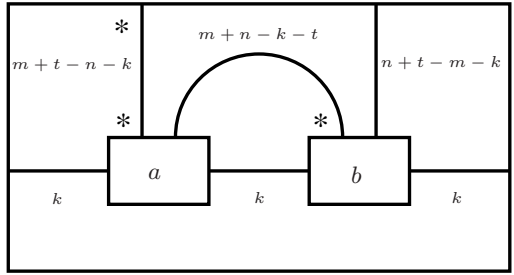}
\end{center}
\caption{Definition of the $P_t$ component of $a\#b$}
\label{fig:mult}
\end{figure}
\smallskip
As usual, extend the map bilinearly to the whole of $F_k(P)\times F_k(P)$. 
This multiplication map $\#$ makes each vector space $F_k(P)$ an associative, unital and filtered algebra.
\bigskip

\textit {Involution:} Define the involution map $\dagger: F_k(P) \rightarrow F_k(P)$ as follows. For $a=a_m\in P_m\subseteq F_k(P)$ define  $a^{\dagger}$ as in Figure \ref{fig:invol}. 
\begin{figure}[!h]
\begin{center}
\psfrag{k}{\tiny $k$}
\psfrag{a}{\small $a^*$}
\psfrag{2n-2k}{\tiny $2m-2k$}
\includegraphics[scale=.55]{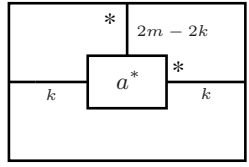}
\caption{Definition of the involution}
\label{fig:invol}
\end{center}
\end{figure}
In other words, $a^{\dagger}= Z_{R^k}(a^*)$. Here, of course $`*$' denotes the usual involution on $P_n$'s.
This involution map makes each $F_k(P)$ a $*$-algbera.
\bigskip

\textit{Trace:} For $a=(a_k,a_{k+1},\cdots)\in F_k(P)$ we define a linear functional $t_k$ on $F_k(P)$ by $t_k(a)=\tau(a_k)$
which defines a normalized trace on $F_k(P)$ that makes $\langle a, b\rangle := t_k(b^{\dagger}\# a)$ an inner-product on $F_k(P)$. 
Note that the trace of $a$ is the trace of its $P_k$-component.
\vspace{5mm}

\textit{Inclusion map:} $F_{k-1}(P)$ is included in $F_k(P)$ in such a way that the 
restriction takes $P_{m-1}\subseteq F_{k-1}(P)$ to $P_m\subseteq F_k(P)$ by taking $a\in P_{m-1}$ to the element in Figure \ref{fig:incl}.
\begin{figure}[!h]
\begin{center}
\psfrag{k}{\tiny $k-1$}
\psfrag{a}{\small $a$}
\psfrag{2n-2k}{\tiny $2m-2k$}
\includegraphics[scale=.55]{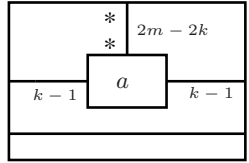}
\end{center}
\caption{Inclusion of  $a\in P_{m-1} \subseteq F_{k-1}(P)$ to $P_{m} \subseteq F_{k}(P)$}
\label{fig:incl}
\end{figure}

\bigskip

\textit{Conditional expectation-like map:} One defines a map $E_{k-1}: F_k(P)\mapsto F_{k-1}(P)$ (for $k\geq 1$) in such a way that
for any arbitrary $a\in P_m\subseteq F_k(P)$ the element $\delta E_{k-1}(a)$ of 
$P_{m-1}\subseteq F_{k-1}(P)$ is given by the tangle in Figure \ref{fig:condexp}.
\begin{figure}[!h]
\begin{center}
\psfrag{k}{\tiny $k-1$}
\psfrag{a}{\small $a$}
\psfrag{2n-2k}{\tiny $2m-2k$}
\includegraphics[scale=.55]{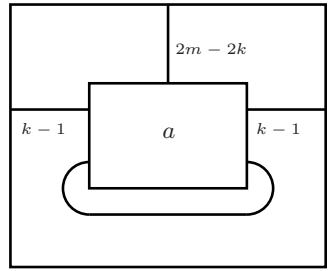}
\end{center}
\caption{Definition of $\delta E_{k-1}$}
\label{fig:condexp}
\end{figure}
\noindent Then the map $E_k$ is a $*$- and trace-preserving $F_k(P)-F_k(P)$ bimodule retraction for the inclusion map of $F_k(P)$ into $F_{k+1}(P)$.

\par We need a little bit more terminology.
\begin{definition}[\cite{KdySnd2009}]
 A \textbf{finite pre-von Neumann algebra} is a complex unital $*$-algebra $A$ that comes equipped with a normalized trace $t$ such that :
 \begin{itemize}
  \item the sesquilinear form defined by $\langle a,b\rangle = t(b^*a)$ defines an inner product on $A$. Denote the inner product by ${\langle. ,.\rangle}_A$.
  \item for each $a\in A$, the left multiplication map $\lambda_A(a): A\rightarrow A$ is bounded for the trace-induced norm of $A$.
 \end{itemize}

\end{definition}

Examples are $F_k(P)$ with their natural traces $t_k$ for $k\geq 0$.

\begin{notation}\label{completion}
 Let $\mathcal{H}_A$ be the Hilbert space completion of $A$ for the associated norm. As usual there exists a natural one-one and linear map $\Gamma: A\rightarrow \mathcal{H}_A$ such that ${\langle \Gamma(a), \Gamma(b)\rangle}_{{\mathcal{H}}_{A}} = {\langle a, b\rangle}_{A}$ for all $a,b\in A$ and $\Gamma(A)$ is dense in $\mathcal{H}_{A}.$
Let us denote by $\mathcal{H}_k(P)$ the Hilbert space completion of $F_k(P)$. 
 The Hilbert space $\mathcal{H}_k(P)$ is nothing but the orthogonal direct sum ${\oplus}_{n=k}^{\infty}P_n$.
\end{notation}

The following two results that we quote without proof are taken from \cite{KdySnd2009}.

\begin{lemma}[Lemma 4.4 of \cite{KdySnd2009}]\label{l1}
 Let $A$ be a finite pre-von Neumann algebra with trace $t_A$, and $\mathcal{H}_{A}$ be the Hilbert space completion of $A$ for the associated norm, so that
 the left regular representation $\lambda_A:A\rightarrow \mathcal{B}({\mathcal{H_A}})$ is well defined, i.e. for each $a\in A$, $\lambda_A(a):A\rightarrow A$ extends to a bounded operator on $\mathcal{H}_{A}$.
 Let $M^{}_A = {\lambda_A(A)}^{\dprime}.$ Then, 
 \begin{enumerate}
  \item  The `vacuum vector' $\Omega_A\in \mathcal{H}_A$(corresponding to $1\in A\subseteq \mathcal{H}_{A})$ is cyclic and separating for the von-Neumann algebra  $M^{}_A$.
  \item The trace $t_A$ extends to faithful, normal, tracial states $t^{}_A$ on $M^{}_A$.

 \end{enumerate}
 \end{lemma}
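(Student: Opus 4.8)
The plan is to observe that this is precisely the GNS construction attached to a faithful tracial state, and to run the standard argument in the present (purely algebraic) setting. Write $\Omega_A = \Gamma(1)$. Cyclicity of $\Omega_A$ for $M_A$ is immediate, since $\lambda_A(A)\Omega_A = \Gamma(A)$ is dense in $\mathcal{H}_A$ by construction and $\lambda_A(A) \subseteq M_A$.

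To show that $\Omega_A$ is separating I would bring in the \emph{right} regular representation $\rho_A$, defined on $A$ by $\rho_A(a)(x) = xa$. The first thing to check is that each $\rho_A(a)$ is bounded for the trace-induced norm $\|\cdot\|_2$: since $t_A$ is a trace, the conjugation $x \mapsto x^*$ is an isometry of $(A, \|\cdot\|_2)$, so $\|xa\|_2 = \|a^*x^*\|_2 = \|\lambda_A(a^*)x^*\|_2 \le \|\lambda_A(a^*)\|\,\|x\|_2$ and $\rho_A(a)$ extends to a bounded operator on $\mathcal{H}_A$. Associativity of the product then gives $\lambda_A(a)\rho_A(b) = \rho_A(b)\lambda_A(a)$ on $A$, hence on $\mathcal{H}_A$ by density, so $\rho_A(A) \subseteq \lambda_A(A)' = M_A'$. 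Since $\rho_A(A)\Omega_A = \Gamma(A)$ is again dense, $\Omega_A$ is cyclic for $M_A'$, and a vector cyclic for the commutant is automatically separating for the algebra: if $x \in M_A$ and $x\Omega_A = 0$, then $xy\Omega_A = yx\Omega_A = 0$ for all $y \in M_A'$, so $x$ vanishes on a dense subspace and $x = 0$. This settles part (1).

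For part (2) I would define $t_A(x) = \langle x\Omega_A, \Omega_A\rangle$ for $x \in M_A$. As a vector state it is automatically normal, and it extends the original trace since $\langle \lambda_A(a)\Omega_A, \Omega_A\rangle = \langle \Gamma(a), \Gamma(1)\rangle = \langle a, 1\rangle_A = t_A(a)$; faithfulness is then immediate from the separating property just proved, because $t_A(x^*x) = \|x\Omega_A\|^2$. The real content is traciality. The key identity here is $\lambda_A(a)\Omega_A = \Gamma(a) = \rho_A(a)\Omega_A$, which for $x \in M_A$ and $b \in A$ yields
\begin{align*}
\langle x\lambda_A(b)\Omega_A,\Omega_A\rangle
&= \langle \rho_A(b)x\Omega_A,\Omega_A\rangle = \langle x\Omega_A,\rho_A(b^*)\Omega_A\rangle \\
&= \langle \lambda_A(b)x\Omega_A,\Omega_A\rangle,
\end{align*}
using $\rho_A(b) \in M_A'$ and $\rho_A(b)^* = \rho_A(b^*)$; this is the trace identity $t_A(xy) = t_A(yx)$ for $y \in \lambda_A(A)$. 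To pass to arbitrary $y \in M_A$, I would invoke Kaplansky's density theorem to choose a bounded net $b_i \in A$ with $\lambda_A(b_i) \to y$ in the strong$^*$ topology and then check that both sides of the trace identity converge — which works because the relevant quantities are inner products against $\lambda_A(b_i)\Omega_A$ and $\lambda_A(b_i^*)\Omega_A$, both convergent. I expect this limiting step to be the only delicate point: one must use strong$^*$ (rather than merely weak-operator) approximation so that $b_i$ \emph{and} $b_i^*$ act convergently on $\Omega_A$, and the boundedness of the net is what keeps the products under control.
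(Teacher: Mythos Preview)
The paper does not prove this lemma at all: it is explicitly quoted without proof from \cite{KdySnd2009}. So there is no ``paper's own proof'' to compare against.

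That said, your argument is the standard and correct one. Cyclicity is immediate; the introduction of the right regular representation $\rho_A$ to obtain a commuting algebra with cyclic vector, and hence the separating property, is exactly the classical route (and is, in fact, essentially what \cite{KdySnd2009} does as well). Your verification that $\rho_A(a)$ is bounded via the antilinear isometry $x\mapsto x^*$ is clean and correct, as is the computation $\rho_A(b)^*=\rho_A(b^*)$.

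One minor simplification for the traciality step: you do not actually need Kaplansky or strong$^*$ convergence. For fixed $x\in M_A$, both $y\mapsto \langle xy\Omega_A,\Omega_A\rangle=\langle y\Omega_A,x^*\Omega_A\rangle$ and $y\mapsto \langle yx\Omega_A,\Omega_A\rangle$ are weak-operator continuous in $y$, so the set of $y$ satisfying $t_A(xy)=t_A(yx)$ is WOT-closed and contains the WOT-dense subalgebra $\lambda_A(A)$; hence it is all of $M_A$. Your Kaplansky argument is not wrong, just heavier than necessary.
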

 \smallskip
 
\par Let us denote by $M^{}_k(P) \subseteq \mathcal{B}(\mathcal{H}_k(P))$  the von Neumann algebra corresponding to the finite pre-von Neumann algebra $F_k(P)$. Let $\Omega_k \in \mathcal{H}_k(P)$ be the cyclic and separating vector for $M^{}_k(P)$. It is easy to see that under the appropriate identifications, 
$\Omega_0 = \Omega_1 = \Omega_2 = \cdots$.

\begin{theorem}[Theorem 6.2 of \cite{KdySnd2009}]\label{gjsks}
 Let $P$ be a subfactor planar algebra of modulus $\delta>1$. Then $M^{}_{k-1}(P)\subseteq M^{}_k(P)\subseteq M^{}_{k+1}(P)$ is (isomorphic to) a basic construction tower of type $II_1$ -factors with finite index ${\delta}^2$.
 Moreover, the subfactor $M^{}_0(P) \subseteq M^{}_1(P)$ constructed from $P$ is a finite index and extremal subfactor with planar algebra isomorphic to $P$.
 \end{theorem}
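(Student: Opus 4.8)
The plan is to extract the whole statement from the structures recalled above, in four steps: (i) promote the maps $E_{k-1}$ to conditional expectations between the $M_k(P)$; (ii) produce Jones projections inside the $M_k(P)$ and thereby identify $M_{k+1}(P)$ with the basic construction of $M_{k-1}(P)\subseteq M_k(P)$, of index $\delta^2$; (iii) prove that $M_0(P)$ is a factor; and (iv) compute the relative-commutant tower, deducing extremality and the identification of the planar algebra with $P$. By Lemma~\ref{l1} we already have von Neumann algebras $M_k(P)\subseteq\mathcal{B}(\mathcal{H}_k(P))$ with faithful normal tracial states $t_k$, common cyclic and separating vector $\Omega$, and inclusions $M_{k-1}(P)\subseteq M_k(P)$ extending the trace-preserving inclusions $F_{k-1}(P)\subseteq F_k(P)$; moreover the traces are compatible, $t_{k+1}|_{M_k(P)}=t_k$.

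For (i): since $E_{k-1}\colon F_k(P)\to F_{k-1}(P)$ is a $*$- and trace-preserving idempotent onto the $*$-subalgebra $F_{k-1}(P)$ that is moreover an $F_{k-1}(P)$-bimodule map, it is the restriction to $F_k(P)$ of the orthogonal projection $\mathcal{H}_k(P)\to\mathcal{H}_{k-1}(P)$, and hence by Tomiyama's theorem it extends to a faithful normal trace-preserving conditional expectation $E_{k-1}\colon M_k(P)\to M_{k-1}(P)$. For (ii): let $e_{k+1}\in F_{k+1}(P)$ be the element corresponding to the appropriate small ``cup--cap'' tangle, suitably normalised by $\delta$. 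A short sequence of diagrammatic computations, of precisely the pictorial flavour of the figures above, shows that $e_{k+1}=e_{k+1}^{\dagger}=e_{k+1}\# e_{k+1}$, that $e_{k+1}$ commutes with $F_{k-1}(P)$, that $e_{k+1}\# x\# e_{k+1}=E_{k-1}(x)\# e_{k+1}$ for all $x\in F_k(P)$, and that $t_{k+1}(e_{k+1})=\delta^{-2}$; a further diagrammatic argument --- ``unfolding'' an arbitrary $a\in P_m\subseteq F_{k+1}(P)$ across the projection --- shows that $F_{k+1}(P)$, and hence $M_{k+1}(P)$, is generated by $F_k(P)$ (resp.\ $M_k(P)$) together with $e_{k+1}$. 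By the intrinsic characterisation of the basic construction for a finite-index trace-preserving conditional expectation, it then follows that $M_{k-1}(P)\subseteq M_k(P)\subseteq M_{k+1}(P)$, together with the Jones projection $e_{k+1}$, is the basic construction of $M_{k-1}(P)\subseteq M_k(P)$, of index $t_{k+1}(e_{k+1})^{-1}=\delta^2$; each $M_k(P)$ is infinite-dimensional, as it contains $F_k(P)=\bigoplus_{n\ge k}P_n$.

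For (iii): one shows that $M_0(P)$ has trivial centre. The workable route is to exploit the explicit orthogonal decomposition $\mathcal{H}_0(P)=\bigoplus_{n\ge 0}P_n$ together with the explicit bimodule action of $F_0(P)$ on it (and the way the Jones projections of (ii) act on it): an element of $M_0(P)$ commuting with all of $F_0(P)$ is pushed by this action onto the degree-zero summand $P_0$, hence is a scalar. (Alternatively one may import the factoriality from the free-probabilistic descriptions of \cite{GnnJnsShl2010, JnsShlWlk2010}.) Granting factoriality of $M_0(P)$, it is then a $II_1$-factor, and by (ii) so is every $M_k(P)$, with all relative commutants finite-dimensional. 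For (iv): let $\iota_k\colon P_k\to F_k(P)\subseteq M_k(P)$ be the embedding $a\mapsto(0,\dots,0,a)$; a picture shows $\iota_k(P_k)$ commutes with $F_0(P)$, so $\iota_k(P_k)\subseteq M_0(P)'\cap M_k(P)$, and a computation with the explicit bimodule action of $F_0(P)$ on $\mathcal{H}_k(P)=\bigoplus_{n\ge k}P_n$ --- equivalently, an induction on $k$ using the Jones relations of (ii) and the trace --- gives the reverse inclusion, so $M_0(P)'\cap M_k(P)=\iota_k(P_k)$. Identifying the relative-commutant tower of $M_0(P)\subseteq M_1(P)$ with $(P_n)_n$ in this way, one checks by inspection of the formulas above that the generating planar tangles --- multiplication, inclusion, the conditional-expectation tangles, the Jones-projection tangles and the rotation --- act on the relative-commutant tower exactly as they act in $P$; since these generate the planar operad, the subfactor planar algebra of $M_0(P)\subseteq M_1(P)$ is isomorphic to $P$. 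Finally, extremality of $M_0(P)\subseteq M_1(P)$ follows because the trace $t_1$ induces on $M_0(P)'\cap M_1(P)\cong P_1$ the canonical trace of $P$, which is invariant under the flip of the two shadings --- equivalently, the left and right $M_0(P)$-dimension functions on $M_0(P)'\cap M_1(P)$ coincide.

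The step I expect to be the main obstacle is (iii), the factoriality of $M_0(P)$: one must control an arbitrary element of $M_0(P)$ commuting with the dense $*$-subalgebra $F_0(P)$ \emph{without} presupposing the very factoriality one is after, and the only effective handle on this is the explicit $L^2$-picture $\mathcal{H}_0(P)=\bigoplus_{n\ge 0}P_n$ together with the bimodule and Jones-projection actions on it. This computation --- equivalently, in the GJS language, understanding the graded algebra well enough to read off its centre --- is the technical heart of the construction and is carried out in detail in \cite{KdySnd2009}; once it is in place, (i), (ii) and (iv) are essentially bookkeeping built on routine diagrammatics.
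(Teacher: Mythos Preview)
The paper does not prove this statement at all: immediately before Lemma~\ref{l1} it says ``The following two results that we quote without proof are taken from \cite{KdySnd2009},'' and Theorem~\ref{gjsks} is the second of these. So there is no proof in the paper to compare your proposal against; the authors are simply importing the result from \cite{KdySnd2009} as a black box.

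Your sketch is a reasonable outline of how the argument in \cite{KdySnd2009} actually proceeds (Jones projections via cup--cap tangles, basic-construction identification, factoriality of $M_0(P)$ as the hard step, computation of relative commutants and verification on generating tangles), and you yourself defer the crux (iii) back to \cite{KdySnd2009}. For the purposes of the present paper nothing more is needed than the citation; if you intend this as a standalone proof, the only real gap is exactly the one you flag --- factoriality --- which cannot be completed without reproducing the analysis of the centre carried out in \cite{KdySnd2009} (or the free-probability argument of \cite{GnnJnsShl2010,JnsShlWlk2010}).
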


\section{Commuting squares to planar subalgebras}
Throughout this section, we will deal with a non-degenerate commuting square $\mathscr{C}$ of $II_1$-factors
$$
\begin{matrix}
L &\subseteq & M\cr
\rotatebox{90}{$\subseteq$} &\ &\rotatebox{90}{$\subseteq$}\cr
N &\subseteq & K
\end{matrix}
$$ 
with all inclusions extremal of finite index. Suppose that the associated basic construction towers are given by
\[
\begin{array}{ccccccccccc}
M_0 = L & \ss & M = M_1 & \ss & M_2 & \ss &
\cdots  \\
\rotatebox{90}{$\subseteq$} &&\rotatebox{90}{$\subseteq$} && \rotatebox{90}{$\subseteq$} && \\
K_0 = N & \ss & K = K_1 & \ss & K_2 & \ss &
\cdots  \\
\end{array}\]

%
The following definition is from \cite{Ppa1994}.

\begin{definition}\label{s}
The commuting square $\mathscr{C}$ is said to be smooth if 
$$N^{\prime}\cap K_k \subseteq L^{\prime}\cap M_k$$ for all $k\geq 0$.
\end{definition}

Before we prove the main result of this section, we note a result of \cite{Ppa1994} -
see Proposition 2.3.2 - which implies that if $\mathscr{C}$ is smooth, then each square in the basic construction tower
 is also a smooth non-degenerate commuting square.
We also recall without proof two elementary facts about general  commuting squares (i.e., not necessarily non-degenerate and not necessarily factors).

\begin{lemma}
 \label{coset}
 Let $A_{10} \subseteq A_{11}$ be a pair of finite von Neumann algebras with a faithful normal trace $tr$ on $A_{11}$ and let $S$ be a self-adjoint subset of $A_{10}$. Then
 $$\begin{matrix}
A_{10} &\subseteq & A_{11}\cr
\rotatebox{90}{$\subseteq$} &\ &\rotatebox{90}{$\subseteq$}\cr
S^{\prime}\cap A_{10} &\subseteq & S^{\prime}\cap A_{11}
\end{matrix}
$$
is a commuting square.\qed
\end{lemma}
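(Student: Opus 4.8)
The statement to prove is Lemma~\ref{coset}: given finite von Neumann algebras $A_{10} \subseteq A_{11}$ with faithful normal trace $tr$ on $A_{11}$, and a self-adjoint subset $S \subseteq A_{10}$, the square with corners $A_{10}, A_{11}, S'\cap A_{10}, S'\cap A_{11}$ is a commuting square. Recall that this means: if $E^{A_{11}}_{A_{10}}$ denotes the $tr$-preserving conditional expectation onto $A_{10}$, and $E^{A_{11}}_{S'\cap A_{11}}$ the one onto $S'\cap A_{11}$, then $E^{A_{11}}_{A_{10}}$ restricted to $S'\cap A_{11}$ has range inside $S'\cap A_{10}$ and agrees there with $E^{A_{11}}_{S'\cap A_{11}}$; equivalently, $E^{A_{11}}_{A_{10}} \circ E^{A_{11}}_{S'\cap A_{11}} = E^{A_{11}}_{S'\cap A_{10}}$.

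First I would note that all the relevant conditional expectations exist and are unique because $tr$ is a faithful normal trace on $A_{11}$, and each of $A_{10}$, $S'\cap A_{11}$, $S'\cap A_{10}$ is a von Neumann subalgebra of $A_{11}$ (the relative commutant of a self-adjoint set is automatically a von Neumann subalgebra). The key step is to show that the $tr$-preserving conditional expectation $E := E^{A_{11}}_{A_{10}}: A_{11} \to A_{10}$ maps $S' \cap A_{11}$ into $S' \cap A_{10}$. For this, fix $s \in S$ and $x \in S'\cap A_{11}$; since $S$ is self-adjoint, both $s$ and $s^*$ lie in $A_{10}$, and $xs = sx$. Using the $A_{10}$-bimodularity of $E$ we get $E(x)s = E(xs) = E(sx) = sE(x)$, so $E(x)$ commutes with every $s \in S$, i.e. $E(x) \in S'\cap A_{10}$. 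This shows $E(S'\cap A_{11}) \subseteq S'\cap A_{10}$.

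Next I would observe that $E$ restricted to $S'\cap A_{11}$ is itself a $tr$-preserving conditional expectation onto $S'\cap A_{10}$: it is a unital positive $(S'\cap A_{10})$-bimodule map (inherited from the $A_{10}$-bimodularity of $E$) that fixes $S'\cap A_{10}$ pointwise, and it preserves the trace because $E$ does. By uniqueness of the trace-preserving conditional expectation onto a von Neumann subalgebra, this restriction must equal $E^{S'\cap A_{11}}_{S'\cap A_{10}}$, and hence $E^{A_{11}}_{S'\cap A_{10}} = E|_{S'\cap A_{11}} \circ E^{A_{11}}_{S'\cap A_{11}} = E^{A_{11}}_{A_{10}} \circ E^{A_{11}}_{S'\cap A_{11}}$, which is exactly the commuting square condition.

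There is essentially no serious obstacle here; the only point requiring a little care is making sure one is using the correct (equivalent) formulation of the commuting square condition and that the relevant conditional expectations are well-defined, which is guaranteed by the faithful normal trace hypothesis. A symmetric argument (or the standard fact that the commuting square condition is symmetric in the two "off-diagonal" algebras) handles the statement with the roles of $A_{10}$ and $S'\cap A_{11}$ interchanged, though it is not strictly needed once one has the identity $E^{A_{11}}_{A_{10}} \circ E^{A_{11}}_{S'\cap A_{11}} = E^{A_{11}}_{S'\cap A_{10}}$.
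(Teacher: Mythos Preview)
Your argument is correct and is the standard one: the $A_{10}$-bimodularity of $E^{A_{11}}_{A_{10}}$ together with $S\subseteq A_{10}$ forces $E^{A_{11}}_{A_{10}}(S'\cap A_{11})\subseteq S'\cap A_{10}$, and uniqueness of the trace-preserving conditional expectation finishes it. The paper itself does not supply a proof of this lemma---it is explicitly recorded ``without proof'' as an elementary fact---so there is nothing to compare against; your write-up would serve perfectly well as the omitted verification.
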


\begin{lemma}
 \label{comm1}
 Consider a tower of quadruples of finite von Neumann algebras with a faithful normal trace $tr$ on $A_{12}$
 $$
\begin{matrix}
A_{10} &\subseteq & A_{11} & \subseteq & A_{12}\cr
\rotatebox{90}{$\subseteq$} &\ &\rotatebox{90}{$\subseteq$} &\ & \rotatebox{90}{$\subseteq$}\cr
A_{00} &\subseteq & A_{01} & \subseteq & A_{02}
\end{matrix}
$$ such that the following two squares are commuting squares with respect to $tr$
$$
\begin{matrix}
A_{10} &\subseteq & A_{12}\cr
\rotatebox{90}{$\subseteq$} &\ &\rotatebox{90}{$\subseteq$}\cr
A_{00} &\subseteq & A_{02}
\end{matrix} \hspace{0.5cm}\text{and}\hspace{0.5cm}
\begin{matrix}
A_{11} &\subseteq & A_{12}\cr
\rotatebox{90}{$\subseteq$} &\ &\rotatebox{90}{$\subseteq$}\cr
A_{01} &\subseteq & A_{02}
\end{matrix}.
$$ 
Then, $$
\begin{matrix}
A_{10} &\subseteq & A_{11}\cr
\rotatebox{90}{$\subseteq$} &\ &\rotatebox{90}{$\subseteq$}\cr
A_{00} &\subseteq & A_{01}
\end{matrix}
$$ is also a commuting square.\qed
\end{lemma}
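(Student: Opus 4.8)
The plan is to reduce everything to a statement about conditional expectations. Write $E_{ij}^{kl}$ for the trace-preserving conditional expectation of $A_{kl}$ onto $A_{ij}$ whenever $A_{ij} \subseteq A_{kl}$; all of these are compatible because they are taken with respect to the single faithful normal trace $tr$ on $A_{12}$ and its restrictions. Recall that a square
$$
\begin{matrix}
B_{10} &\subseteq & B_{11}\cr
\rotatebox{90}{$\subseteq$} &\ &\rotatebox{90}{$\subseteq$}\cr
B_{00} &\subseteq & B_{01}
\end{matrix}
$$
is a commuting square (with respect to $tr$) precisely when $E_{B_{00}}^{B_{11}} = E_{B_{00}}^{B_{10}} \circ E_{B_{10}}^{B_{11}}$ on $B_{11}$, or equivalently when $E_{B_{10}}^{B_{11}}(B_{01}) \subseteq B_{00}$; and that composition of conditional expectations along a chain $B_{10} \subseteq B_{11} \subseteq B_{12}$ is well behaved, namely $E_{B_{10}}^{B_{12}} = E_{B_{10}}^{B_{11}} \circ E_{B_{11}}^{B_{12}}$.

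First I would translate the two hypothesized commuting squares into the identities
$$
E_{A_{00}}^{A_{02}} = E_{A_{00}}^{A_{10}} \circ E_{A_{10}}^{A_{02}}
\qquad\text{and}\qquad
E_{A_{01}}^{A_{02}} = E_{A_{01}}^{A_{11}} \circ E_{A_{11}}^{A_{02}},
$$
each holding on all of $A_{02}$. The goal is to prove $E_{A_{00}}^{A_{01}} = E_{A_{00}}^{A_{10}} \circ E_{A_{10}}^{A_{01}}$ on $A_{01}$. Take $x \in A_{01} \subseteq A_{02}$. On the one hand, $E_{A_{00}}^{A_{02}}(x) = E_{A_{00}}^{A_{01}}(x)$ since $x \in A_{01}$ and $A_{00} \subseteq A_{01}$, so the first commuting-square identity gives $E_{A_{00}}^{A_{01}}(x) = E_{A_{00}}^{A_{10}}\big(E_{A_{10}}^{A_{02}}(x)\big)$. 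On the other hand, factor $E_{A_{10}}^{A_{02}} = E_{A_{10}}^{A_{11}} \circ E_{A_{11}}^{A_{02}}$ through the chain $A_{10} \subseteq A_{11} \subseteq A_{02}$; since $x \in A_{01} \subseteq A_{11}$ we have $E_{A_{11}}^{A_{02}}(x) = x$, and therefore $E_{A_{10}}^{A_{02}}(x) = E_{A_{10}}^{A_{11}}(x)$. Now $E_{A_{10}}^{A_{11}}(x) = E_{A_{10}}^{A_{11}}\big(E_{A_{01}}^{A_{11}}(x)\big)$ — here I use the second commuting square, whose expectation identity $E_{A_{01}}^{A_{02}} = E_{A_{01}}^{A_{11}} \circ E_{A_{11}}^{A_{02}}$ applied to $x \in A_{01}$ shows $E_{A_{01}}^{A_{11}}(x) = x$, which is trivially true but more importantly the second square tells us $E_{A_{11}}^{A_{12}}(A_{01}) \subseteq$ ... — so the cleaner route is: the second commuting square gives $E_{A_{10}}^{A_{11}}|_{A_{01}} = E_{A_{10}}^{A_{01}} \circ E_{A_{01}}^{A_{11}}|_{A_{01}} = E_{A_{10}}^{A_{01}}$ after restricting the expectation identity $E_{A_{00}}^{A_{01}}\subseteq$... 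Actually the key consequence I want from the second square is that restricting $E_{A_{10}}^{A_{11}}$ to $A_{01}$ lands in $A_{00}$... let me instead just use: by Lemma \ref{coset}-type reasoning, the second square being commuting means $E_{A_{10}}^{A_{11}}(A_{01}) \subseteq A_{00}$ only if $A_{10}\cap A_{01}=A_{00}$, which we do not know. So I will argue directly: combining the two displays, $E_{A_{00}}^{A_{01}}(x) = E_{A_{00}}^{A_{10}}\big(E_{A_{10}}^{A_{11}}(x)\big)$, and it remains to identify $E_{A_{10}}^{A_{11}}(x)$ with an element whose $A_{00}$-expectation equals $E_{A_{00}}^{A_{10}}(E_{A_{10}}^{A_{01}}(x))$. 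Since $E_{A_{00}}^{A_{10}} \circ E_{A_{10}}^{A_{11}} = E_{A_{00}}^{A_{11}}$ and, for $x\in A_{01}$, $E_{A_{00}}^{A_{11}}(x) = E_{A_{00}}^{A_{01}}(x)$ as well as $E_{A_{00}}^{A_{01}} = E_{A_{00}}^{A_{10}} \circ E_{A_{10}}^{A_{01}}$ is exactly what we want — so in fact the whole statement collapses to the compatibility $E_{A_{00}}^{A_{11}} = E_{A_{00}}^{A_{10}} \circ E_{A_{10}}^{A_{11}}$, i.e. the square $A_{00}\subseteq A_{10}\subseteq A_{11}$ over $A_{00}\subseteq ?$...

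The cleanest packaging, which I would write up, is this. The first hypothesis says $E_{A_{10}}^{A_{02}}(A_{02}) \cap A_{00}$-statement, equivalently $E_{A_{10}}^{A_{02}}|_{A_{00}\text{-side}}$; concretely it yields $E_{A_{00}}^{A_{02}} = E_{A_{00}}^{A_{10}}E_{A_{10}}^{A_{02}}$. Restrict both sides to $A_{01}$: the left side is $E_{A_{00}}^{A_{01}}$, and on the right, $E_{A_{10}}^{A_{02}}|_{A_{01}} = E_{A_{10}}^{A_{11}}|_{A_{01}}$ (factor through $A_{11}$, and $A_{01}\subseteq A_{11}$). So $E_{A_{00}}^{A_{01}} = E_{A_{00}}^{A_{10}} \circ \big(E_{A_{10}}^{A_{11}}|_{A_{01}}\big)$. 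Finally, the \emph{second} hypothesis, restricted along $A_{10}\subseteq A_{11}$ — precisely, $E_{A_{10}}^{A_{11}}|_{A_{01}} = E_{A_{10}}^{A_{01}}$ because $A_{01}\subseteq A_{11}$ and the second commuting square forces $E_{A_{10}}^{A_{11}}$ to preserve $A_{01}$ mapping it into $A_{01}$ — no: what the second square gives directly is $E_{A_{01}}^{A_{11}}E_{A_{10}}^{A_{11}} = E_{A_{10}}^{A_{11}}$ on $A_{11}$ isn't standard either. I will use the symmetric form of the commuting square condition for the second square, $E_{A_{10}}^{A_{11}}(A_{01}) \subseteq A_{10}$ trivially and $E_{A_{01}}^{A_{11}}(A_{10}) \subseteq A_{01}$, together with the fact that for $x\in A_{01}$, $E_{A_{10}}^{A_{11}}(x)$ and $E_{A_{10}}^{A_{01}}(x)$ have the same inner product against every element of $A_{10}$ (using traciality and that $E_{A_{01}}^{A_{11}}$ fixes $A_{01}$ and, by the second commuting square, $E_{A_{11}}^{?}$...).

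The honest summary: the main obstacle is bookkeeping — proving $E_{A_{10}}^{A_{11}}|_{A_{01}} = E_{A_{10}}^{A_{01}}$, which is the ``restriction'' of the second commuting square to the sub-chain $A_{10}\subseteq A_{11}$, $A_{01}\subseteq A_{11}$. This follows because for $x\in A_{01}$ and $y\in A_{10}$, $tr(E_{A_{10}}^{A_{11}}(x)\,y) = tr(x\,y) = tr(E_{A_{10}}^{A_{01}}(x)\,y)$, where the second equality is the defining property of $E_{A_{10}}^{A_{01}}$ (valid since $y\in A_{10}$, $x\in A_{01}$, and $A_{10}, A_{01}$ sit inside the larger algebra where the second square is commuting — this is exactly where the second hypothesis, ensuring $E_{A_{01}}^{A_{11}}(y)\in A_{01}$ so that $tr(xy) = tr(x E_{A_{01}}^{A_{11}}(y))$ makes sense and the expectation $E_{A_{10}}^{A_{01}}$ is the one induced by $tr$). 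Given this lemma, chaining the three identities above yields $E_{A_{00}}^{A_{01}} = E_{A_{00}}^{A_{10}} \circ E_{A_{10}}^{A_{01}}$ on $A_{01}$, which is precisely the assertion that the bottom-left square is a commuting square. I expect the write-up to be about a third of a page once the conditional-expectation calculus is invoked cleanly; the only real care needed is keeping track of which trace restricts to which and noting that every expectation in sight is the unique $tr$-preserving one, so all composition identities hold globally on $A_{12}$ before restriction.
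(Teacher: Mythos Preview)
The paper states this lemma without proof (it is introduced as one of two ``elementary facts'' recalled without proof, and the statement ends with a bare \qed). So there is nothing in the paper to compare your argument against; what matters is whether your argument is correct. It is not, for two reasons.

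First, the identity you repeatedly call ``the commuting square condition,'' namely $E_{B_{00}}^{B_{11}} = E_{B_{00}}^{B_{10}} \circ E_{B_{10}}^{B_{11}}$, is just the tower property for trace-preserving conditional expectations and holds for \emph{any} chain $B_{00}\subseteq B_{10}\subseteq B_{11}$, commuting square or not. The actual commuting-square condition for the square with corners $B_{00},B_{01},B_{10},B_{11}$ is $E_{B_{10}}^{B_{11}}(B_{01})\subseteq B_{00}$ (equivalently $E_{B_{10}}^{B_{11}}|_{B_{01}} = E_{B_{00}}^{B_{01}}$, viewing $A_{00}\subseteq A_{10}$). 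You do write this equivalent form at one point, but every time you try to \emph{use} a hypothesis you invoke the tower identity instead, so your displayed chain of equalities never actually uses either hypothesized commuting square.

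Second, the map $E_{A_{10}}^{A_{01}}$ that your final argument hinges on is ill-defined: $A_{10}$ is not a subalgebra of $A_{01}$ (they are two incomparable intermediate algebras between $A_{00}$ and $A_{11}$), so there is no conditional expectation of $A_{01}$ onto $A_{10}$. The target identity you should be aiming for is $E_{A_{10}}^{A_{11}}|_{A_{01}} = E_{A_{00}}^{A_{01}}$, not ``$E_{A_{10}}^{A_{11}}|_{A_{01}} = E_{A_{10}}^{A_{01}}$.''

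Once the goal is stated correctly the proof is one line. Take $x\in A_{01}\subseteq A_{02}$. The first hypothesized commuting square says $E_{A_{10}}^{A_{12}}(x)\in A_{00}$. Since $x\in A_{01}\subseteq A_{11}$ and $A_{10}\subseteq A_{11}\subseteq A_{12}$, the tower property gives $E_{A_{10}}^{A_{12}}(x)=E_{A_{10}}^{A_{11}}\big(E_{A_{11}}^{A_{12}}(x)\big)=E_{A_{10}}^{A_{11}}(x)$. Hence $E_{A_{10}}^{A_{11}}(x)\in A_{00}$, which is exactly the commuting-square condition for the left square. (Note that the second hypothesized commuting square is not even needed; the paper presumably includes it only because both squares are available in the application.)
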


 \begin{theorem}
 \label{commuting}
If $\mathscr{C}$ is a smooth non-degenerate commuting square, then the planar algebra of $(N\subseteq K)$ is a planar subalgebra of the planar algebra of $(L \subseteq M)$.
\end{theorem}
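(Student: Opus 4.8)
The plan is to realize both subfactor planar algebras through relative commutants in their basic construction towers, to use smoothness --- propagated through the tower by the quoted result of \cite{Ppa1994} --- to embed the box spaces of the planar algebra of $(N\subseteq K)$ into those of the planar algebra of $(L\subseteq M)$, and then to check that these embeddings intertwine the action of every planar tangle.

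Write $Q$ for the planar algebra of $(N\subseteq K)$ and $P$ for that of $(L\subseteq M)$. In the standard identification the box spaces of $Q$ are the relative commutants $N'\cap K_j$ (positively shaded) and $K'\cap K_j$ (negatively shaded), and those of $P$ are $L'\cap M_j$ and $M'\cap M_j$; the $0$-boxes are all $\mathbb{C}$, and $[K:N]=[M:L]$ because $\mathscr{C}$ is a commuting square, so the two moduli coincide. Smoothness of $\mathscr{C}$ says precisely that $N'\cap K_j\subseteq L'\cap M_j$ for all $j\geq 0$, which gives the inclusions of positively shaded box spaces. For the negatively shaded ones I would apply Proposition 2.3.2 of \cite{Ppa1994}: the square $M=M_1\subseteq M_2$, $K=K_1\subseteq K_2$ sitting one level up in the basic construction tower is again a smooth non-degenerate commuting square, and its smoothness reads $K'\cap K_j\subseteq M'\cap M_j$ for all $j\geq 1$, which gives the inclusions of negatively shaded box spaces. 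Since $\mathscr{C}$ --- and, by the same Proposition, every square of its basic construction tower --- is a commuting square, the conditional expectations and the (unique) faithful normal traces are mutually compatible, so the resulting color-by-color inclusion $Q\hookrightarrow P$ is a unital, $*$-preserving inclusion of filtered $*$-algebras that respects traces and inner products.

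It remains to show that $Q\hookrightarrow P$ respects the planar structure, and for this it is enough to verify that it intertwines the actions of a generating family of tangles: the unit $0$-tangle, the multiplication tangles, the left and right inclusion tangles, the left and right conditional-expectation (and partial-trace) tangles, the Jones projection tangles, and the rotation tangles. The unit, multiplication and inclusion tangles are immediate, because $Q_{n,\pm}\hookrightarrow P_{n,\pm}$ is a unital algebra inclusion compatible with the tower inclusions $K_n\subseteq K_{n+1}$ sitting inside $M_n\subseteq M_{n+1}$. The conditional-expectation and partial-trace tangles follow from the commuting-square property of each square $M_{n-1}\subseteq M_n$, $K_{n-1}\subseteq K_n$, which gives $E^{M_n}_{M_{n-1}}|_{K_n}=E^{K_n}_{K_{n-1}}$, together with the compatibility of traces. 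The Jones projection tangles follow from non-degeneracy: iterating the basic construction on a non-degenerate commuting square yields a string of compatible basic constructions, so the Jones projection $e_i$ of the tower $(K_j)$ is carried by $K_{i+1}\hookrightarrow M_{i+1}$ onto the Jones projection $e_i$ of the tower $(M_j)$.

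The step I expect to be the main obstacle is the rotation tangle, which is not manifestly built from the commuting-square-compatible data above. I would handle it by recalling the explicit description of the action of an arbitrary tangle $T$ on relative commutants in the subfactor picture (see \cite{Jns1999} and \cite{KdySnd2009}): for $x\in N'\cap M_j$ the element $Z_T(x)$ is computed by inserting $x$ into $T$ and resolving the resulting picture using only products in the algebras $M_i$, the conditional expectations $E^{M_i}_{M_{i-1}}$, and the Jones projections $e_i$ --- all of which, by the previous paragraph, restrict compatibly to the corresponding data of the $K$-tower when fed elements of the $K$-relative commutants. Hence performing the same computation inside the $K$-tower shows that $Z_T(x)$ lies in $N'\cap K_j$ and equals the value computed in the $M$-tower; this settles the rotation and simultaneously re-proves the earlier cases. (Alternatively, one writes out the one-click rotation through the $e_i$'s, the conditional expectations and the trace by the standard formula and appeals to the compatibilities already recorded.) Since $Q\hookrightarrow P$ is $*$-preserving, $Q$ is a $*$-planar subalgebra of $P$, which is the assertion.
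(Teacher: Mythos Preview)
Your proposal is correct and follows essentially the paper's strategy: embed box spaces via smoothness (propagated through the tower by Proposition~2.3.2 of \cite{Ppa1994}) and then verify compatibility on a generating set of tangles, with non-degeneracy supplying the identification of Jones projections.

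The one point worth flagging is the left conditional expectation. The paper works with the generating set $\{1^{0_\pm}, E^{n+2}, EL(1)^{n+1}_{n+1}, I_n^{n+1}, ER_{n+1}^n, M_{n,n}^n\}$ (no rotation) and puts the substantive effort into $EL(1)$ and $ER$: in each case it shows that a certain face of the cubical diagram of relative commutants is itself a commuting square, using Lemmas~\ref{coset} and~\ref{comm1}. Your explicit sentence ``the conditional-expectation and partial-trace tangles follow from $E^{M_n}_{M_{n-1}}|_{K_n}=E^{K_n}_{K_{n-1}}$'' only handles $ER$; the left expectation tangle, which in the subfactor picture is the map $L'\cap M_{n+1}\to M'\cap M_{n+1}$, is not covered by that identity and is exactly the content of the paper's Case~II. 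You do patch this with your final catch-all argument---resolve any tangle through products, the maps $E^{M_i}_{M_{i-1}}$, and the $e_i$, all of which restrict compatibly to the $K$-tower---which is a legitimate shortcut and also absorbs the rotation; the paper instead trades that appeal to the general structure of $Z_T$ for explicit, self-contained commuting-square verifications at each nontrivial generator.
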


\begin{proof}

First, denote by $P = P^{(N\subseteq K)}$ (respectively, $Q = P^{(L \subseteq M)}$) the planar algebra of $N\subseteq K$ (respectively, $L \subseteq M)$. Figure \ref{fig:cubical} shows the standard invariants of $N \ss K$ and $L \ss M$. 
Each arrow
in this figure represents an inclusion map. The dotted arrows on the top level are well defined maps
by the assumption of smoothness on $\mathscr{C}$ while those on the bottom level
are so because of Proposition 2.3.2 of \cite{Ppa1994}.

In particular, for $n \geq 0$, the spaces $P_n = N^\prime \cap K_n$ of the planar algebra $P$ are subspaces of the spaces $Q_n = L^\prime \cap M_n$ of the planar algebra $Q$ - as
observed from the dotted arrows on the top level. 
Also $P_{0_-} = K^\prime \cap K_0$ is the whole of $Q_{0_-} = M^\prime \cap M_0$
-  both being ${\mathbb C}$.
Hence to see that $P$ is a planar subalgebra of $Q$, it suffices to see that for any tangle $T= T^{k_0}_{k_1,\cdots,k_b}$ in a class of `generating tangles', and inputs $x_i \in P_{k_i}$, $Z_T^Q(x_1 \otimes \cdots \otimes x_b) \in P_{k_0}$.

We will use the following collection of generating tangles - see Theorem 3.3 of \cite{KdySnd2004} but with notation for the tangles as in \cite{KdySnd2009} -
$1^{0_+},1^{0_-}$, $E^{n+2} {\text {~for $n \geq 0$}}$, $EL(1)^{n+1}_{n+1} {\text {~for $n \geq 0$}}$, $I_n^{n+1} {\text {~for $n \in Col$}}$, $ER_{n+1}^n {\text {~for $n \in Col$}}$ and $M_{n,n}^n$ $ {\text {for $n \in Col$}}$. In order to be self-contained we illustrate these tangles in Figures \ref{fig:tang1} and \ref{fig:tang2}. Note that the tangles $1^{0_+}$ and $1^{0_-}$ are identical except for the shading which is omitted.

\begin{figure}[!htb]
\centering
\psfrag{n}{\tiny $n$}
\psfrag{i}{\tiny $i$}
\psfrag{$ER_{n+i}^n$ : Right expectations}{$ER_{n+1}^n$ : Right expectation}
\psfrag{2n}{\tiny $2n$}
\psfrag{2}{\tiny $2$}
\psfrag{D_1}{\small $D_1$}
\psfrag{D_2}{\small $D_2$}
\psfrag{$qq$}{$1^{0_\pm}$ : Unit tangles}
\psfrag{$I_n^{n+1}$ : Inclusion}{$I_n^{n+1}$ : Inclusion}
\psfrag{$EL(i)_{n+i}^{n+i}$ : Left expectations}{$EL(1)_{n+1}^{n+1}$ : Left expectation}
\psfrag{$M_{n,n}^n$ : Multiplication}{$M_{n,n}^n$ : Multiplication}
\psfrag{$TR_n^0$ : Trace}{$TR_n^0$ : Trace}
\psfrag{$R^{n+1}_{n+1}$ : Rotation}{$R_{n+1}^{n+1}$ : Rotation}
\psfrag{$1^n$ : Multiplicative identity}{$1^n$ : Mult. identity}
\psfrag{$ER_{n+1}^{n+1}$ : Right expectation}{$ER_{n+1}^{n+1}$ : Right expectation}
\psfrag{$I_n^n$ : Identity}{$I_n^n$ : Identity}
\psfrag{$E^{n+2}$ : Jones projections}{$E^{n+2}$ : Jones projection}
\includegraphics[height=7cm]{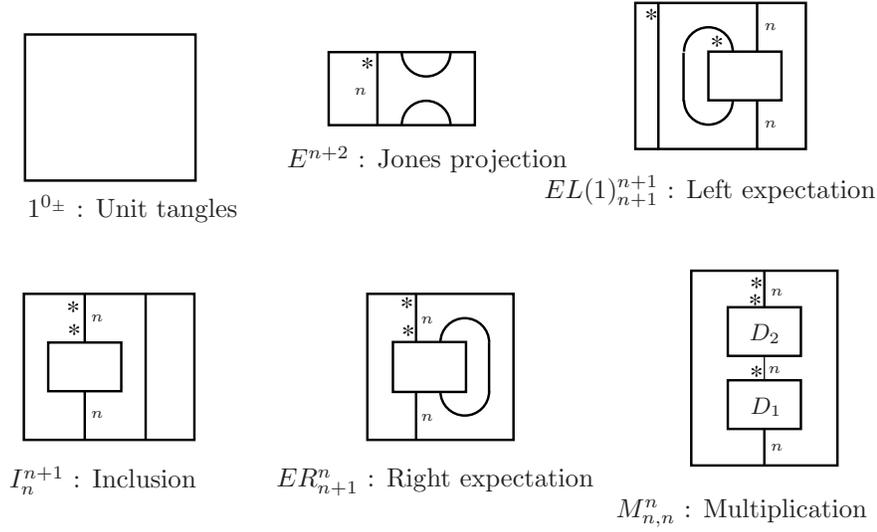}
\caption{$1^{0_\pm}, E^{n+2}, EL(1)^{n+1}_{n+1}, I_n^{n+1}, ER_{n+1}^n, M_{n,n}^n$ for $n \geq 0$.}
\label{fig:tang1}
\end{figure}



\begin{figure}[!htb]
\centering
\psfrag{n}{\tiny $n$}
\psfrag{i}{\tiny $i$}
\psfrag{$ER_{n+i}^n$ : Right expectations}{$ER_{n+1}^n$ : Right expectation}
\psfrag{2n}{\tiny $2n$}
\psfrag{2}{\tiny $2$}
\psfrag{D_1}{\small $D_1$}
\psfrag{D_2}{\small $D_2$}
\psfrag{$qq$}{$I_{0_-}^1$}
\psfrag{$I_n^{n+1}$ : Inclusion}{$I_n^{n+1}$ : Inclusion}
\psfrag{$EL(i)_{n+i}^{n+i}$ : Left expectations}{$EL(1)_{n+1}^{n+1}$ : Left expectation}
\psfrag{$M_{n,n}^n$ : Multiplication}{$M_{0_-,0_-}^{0_-}$}
\psfrag{$TR_n^0$ : Trace}{$TR_n^0$ : Trace}
\psfrag{$R^{n+1}_{n+1}$ : Rotation}{$R_{n+1}^{n+1}$ : Rotation}
\psfrag{$1^n$ : Multiplicative identity}{$1^n$ : Mult. identity}
\psfrag{$ER_{n+1}^{n+1}$ : Right expectation}{$ER_1^{0_-}$}
\psfrag{$I_n^n$ : Identity}{$I_n^n$ : Identity}
\psfrag{$E^{n+2}$ : Jones projections}{$E^{n+2}$ : Jones projection}
\includegraphics[height=2.5cm]{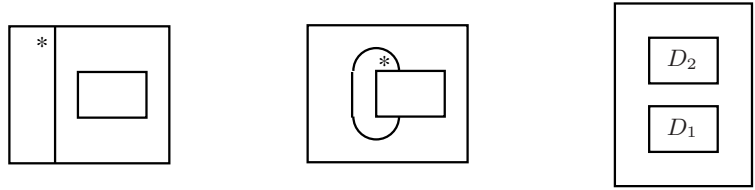}
\caption{$I_n^{n+1}, ER_{n+1}^n$ and $M_{n,n}^n$ for $n = 0_-$.}
\label{fig:tang2}
\end{figure}

%
%
%

\begin{figure}[!h]
\centering
\psfrag{1}{\tiny $L' \cap M_0$}
\psfrag{2}{\tiny $L' \cap M_1$}
\psfrag{3}{\tiny $L' \cap M_2$}
\psfrag{4}{\tiny $L' \cap M_3$}
\psfrag{5}{\tiny $N' \cap K_0$}
\psfrag{6}{\tiny $N' \cap K_1$}
\psfrag{7}{\tiny $N' \cap K_2$}
\psfrag{8}{\tiny $N' \cap K_3$}
\psfrag{9}{\tiny $M' \cap M_1$}
\psfrag{10}{\tiny $M' \cap M_2$}
\psfrag{11}{\tiny $M' \cap M_3$}
\psfrag{12}{\tiny $K' \cap K_1$}
\psfrag{13}{\tiny $K' \cap K_2$}
\psfrag{14}{\tiny $K' \cap K_3$}
\psfrag{15}{\tiny $K' \cap K_0$}
\psfrag{16}{\tiny $M' \cap M_0$}
\includegraphics[scale=0.6]{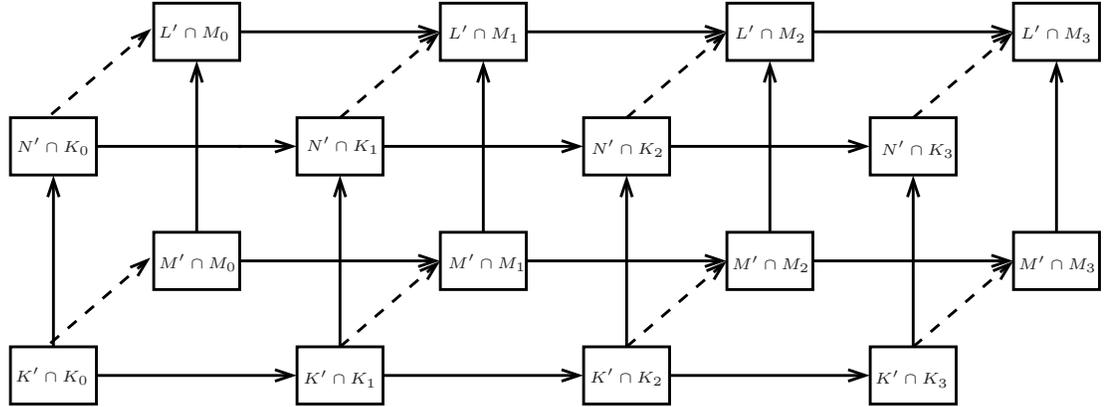}
\caption{Standard invariants of $N \ss K$ and $L \ss M$}
\label{fig:cubical}
\end{figure}

We begin by observing that since $P_n$ are unital subalgebras of $Q_n$ and form
an increasing chain, if $T$ is one of the tangles $1^{0_\pm}$, $I_n^{n+1}$ or $M_{n,n}^n$ then the output for $T$ lies in $P$ whenever the inputs do. We will now
verify that this holds for the remaining three generating tangles  $E^{n+2} {\text {~for $n \geq 0$}}$, $EL(1)^{n+1}_{n+1} {\text {~for $n \geq 0$}}$,  $ER_{n+1}^n {\text {~for $n \in Col$}}$.

Case I: $T = E^{n+2}$ for $n \geq 0$: What needs to be seen is that $Z^Q_T(1)$
lies in $P_{n+2}$. However, $Z^Q_T(1)$ is a scalar multiple  of the Jones projection for the inclusion
$M_n \subseteq M_{n+1}$ which also is the Jones projection for the inclusion 
$K_n \subseteq K_{n+1}$ (since these form a non-degenerate commuting square) and hence lies in $P_{n+2} = N^\prime \cap K_{n+2}$.

Case II: $T = EL(1)^{n+1}_{n+1} {\text {~for $n \geq 0$}}$: What needs to be seen is that for $x \in P_{n+1} = N^\prime \cap K_{n+1}$, $Z^Q_T(x)$ also lies in $P_{n+1}$.
A moment's thought shows that this will follow if the `side faces' of the cubes in Figure \ref{fig:cubical} are commuting squares. Thus we need to see that for any $n \geq 0$,
the square 
$$
\begin{matrix}
N^\prime \cap K_{n+1} &\subseteq & L^\prime \cap M_{n+1}\cr
\rotatebox{90}{$\subseteq$} &\ &\rotatebox{90}{$\subseteq$}\cr
K^\prime \cap K_{n+1} &\subseteq & M^\prime \cap M_{n+1}\end{matrix}
$$ 
is a commuting square.

In other words, we need to show that for any $x\in M^{\prime}\cap M_{n+1}$, we must have $E^{L^{\prime}\cap M_{n+1}}_{N^{\prime}\cap K_{n+1}}(x)\in K^{\prime}\cap K_{n+1}$.
First observe that the following quadruple, say $\mathcal{D}$, is a commuting square:

$$
\begin{matrix}
K^\prime \cap M_{n+1} &\subseteq & N^\prime \cap M_{n+1}\cr
\rotatebox{90}{$\subseteq$} &\ &\rotatebox{90}{$\subseteq$}\cr
K^\prime \cap K_{n+1} &\subseteq & N^\prime \cap K_{n+1}\end{matrix}
$$ 
Indeed, this follows once we apply Lemma \ref{coset} and Lemma \ref{comm1} to the following tower of quadruples:

$$
\begin{matrix}
K^\prime \cap M_{n+1} &\subseteq & N^\prime \cap M_{n+1} & \subseteq & M_{n+1}\cr
\rotatebox{90}{$\subseteq$} &\ &\rotatebox{90}{$\subseteq$} &\ &\rotatebox{90}{$\subseteq$} \cr
K^\prime \cap K_{n+1} &\subseteq & N^\prime \cap K_{n+1} & \subseteq & K_{n+1} \end{matrix}.
$$ 
Now  since $x\in  M^{\prime}\cap M_{n+1}\subset K^{\prime}\cap M_{n+1}$ and $\mathcal{D}$ is a commuting square it follows that $E_{N^{\prime}\cap K_{n+1}}^{N^{\prime}\cap M_{n+1}}(x)\in K^{\prime}\cap K_{n+1}$.
This completes the proof of case II.

%

Case III: $T = ER_{n+1}^n {\text {~for $n \in Col$}}$: If $n = 0_-$ the verification is trivial so we will treat the case $n \geq 0$.
What needs to be seen is that for $x \in P_{n+1} = N^\prime \cap K_{n+1}$, $Z^Q_T(x)$  lies in $P_{n}= N^{\prime}\cap K_n$.
A moment's thought shows that this will follow if the `top faces' of the cubes in Figure \ref{fig:cubical} are commuting squares. Thus we need to see that for any $n \geq 0$,
the square $\mathcal{O}$
$$
\begin{matrix}
L^\prime \cap M_{n} &\subseteq & L^\prime \cap M_{n+1}\cr
\rotatebox{90}{$\subseteq$} &\ &\rotatebox{90}{$\subseteq$}\cr
N^\prime \cap K_{n} &\subseteq & N^\prime \cap K_{n+1}\end{matrix}
$$ 
is a commuting square. Consider $x\in N^{\prime}\cap K_{n+1}$. By Lemma \ref{coset} $$
\begin{matrix}
M_n &\subseteq & M_{n+1}\cr
\rotatebox{90}{$\subseteq$} &\ &\rotatebox{90}{$\subseteq$}\cr
L^{\prime}\cap M_n &\subseteq & L^{\prime}\cap M_{n+1}
\end{matrix}
$$ 
is a commuting square. Thus, $E^{L^{\prime}\cap M_{n+1}}_{L^{\prime}\cap M_n}(x)=E^{M_{n+1}}_{M_n}(x)$. Since, $$\begin{matrix}
M_n &\subseteq & M_{n+1}\cr
\rotatebox{90}{$\subseteq$} &\ &\rotatebox{90}{$\subseteq$}\cr
K_n &\subseteq &  K_{n+1}
\end{matrix}
$$ is also a commuting square we immediately obtain $E^{M_{n+1}}_{M_n}(x)=E^{K_{n+1}}_{K_n}(x).$ Again, by Lemma \ref{coset} we see that 
$$\begin{matrix}
K_n &\subseteq & K_{n+1}\cr
\rotatebox{90}{$\subseteq$} &\ &\rotatebox{90}{$\subseteq$}\cr
N^{\prime}\cap K_n &\subseteq & N^{\prime}\cap K_{n+1}
\end{matrix}
$$ is a commuting square and hence $E^{K_{n+1}}_{K_n}(x)=E^{N^{\prime}\cap K_{n+1}}_{N^{\prime}\cap K_n}(x).$ Therefore, 
$$E^{L^{\prime}\cap M_{n+1}}_{L^{\prime}\cap M_n}(x)=E^{N^{\prime}\cap K_{n+1}}_{N^{\prime}\cap K_n}(x).$$ This proves that $\mathcal{O}$ is a commuting square as desired.\end{proof}

\color{black}
\section{Compatible pairs and quadruples  of finite pre-von Neumann algebras}

\begin{definition}
 \cite{KdySnd2009} A \textbf{compatible pair of finite pre-von Neumann algebras} is a pair $(A,t_A)$ and $(B,t_B)$ of finite pre-von Neumann algebras such that $A\subseteq B$ is a unital inclusion and 
 $t_B{\big|}_A= t_A.$ Given a such pair of compatible pre-von Neumann algebras, identify $\mathcal{H}_A$ with a subspace of $\mathcal{H}_B$ so that $\Omega_A = \Omega_B = \Omega.$
\end{definition}
Some of the following results may be implicit in \cite{KdySnd2009}.
\begin{theorem}\label{t1}
 Let $(A,t_A) \subseteq (B,t_B)$ be a compatible pair of finite pre-von Neumann algebras. Let $E_A:B\rightarrow A$ be a $*$-and trace-preserving $A-A$ bimodule retraction for the inclusion map of $A$ into $B$.
Let $\lambda_A:A\rightarrow \mathcal{B}({\mathcal{H}_A})$ and $\lambda_B:B\rightarrow \mathcal{B}({\mathcal{H}_B})$ be the left regular representations of $A$ and $B$ respectively and let $M^{}_A = {\lambda_A(A)}^{\dprime}$
and $M^{}_B= {\lambda_B(B)}^{\dprime}.$ Then, 
\begin{enumerate}
 \item $t^{\lambda}_B({\iota}_A(x))=t^{\lambda}_A(x)$ for $x\in M^{}_A$, where ${\iota}_A$ is the normal inclusion of $M^{}_A$ into $M^{}_B$ as in Proposition 4.6 of  \cite{KdySnd2009}.
 \item The map $E_A$ extends continuously to an orthogonal projection, call it $e_A$, from $\mathcal{H}_{B}$ onto the closed subspace $\mathcal{H}_{A}$.
 \item $J_Be_AJ_B=e_A$, where $J_B$ is the modular conjugation operator which is the unique bounded extension, to $\mathcal{H}_{B}$, of the involutive,
 conjugate-linear, isometry defined on the dense subspace $\Gamma(B)\subseteq \mathcal{H}_B$ by $\Gamma(b)\mapsto \Gamma(b^*)$.
 \item The map $E_A$ extends to the unique trace preserving conditional expectation map $E^{}_A: M^{}_B\rightarrow M^{}_A$. It is continuous for the $\text{SOT}^*$-topologies on the domain and range.
 \item $E^{}_A(x)(\Gamma(a))=J_A(\lambda_A(a^*)e_Ax^*\Omega)$ for $x\in M^{}_B$ and $a\in A.$
 \end{enumerate}

 \end{theorem}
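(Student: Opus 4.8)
The plan is to dispose of parts $(1)$, $(2)$, $(3)$ directly and to concentrate the work on $(4)$, after which $(5)$ follows by a short computation. For $(1)$: by Proposition 4.6 of \cite{KdySnd2009}, $\iota_A$ is the unique normal unital $*$-homomorphism $M_A\to M_B$ with $\iota_A(\lambda_A(a))=\lambda_B(a)$ for $a\in A$; hence $t^{\lambda}_B\circ\iota_A$ and $t^{\lambda}_A$ are normal tracial states on $M_A$ that agree on the $\sigma$-weakly dense $*$-subalgebra $\lambda_A(A)$ (each being $\lambda_A(a)\mapsto t_A(a)$, using $t_B|_A=t_A$), and therefore coincide.

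For $(2)$: using that $E_A$ is left $A$-linear with $t_A\circ E_A=t_B$, together with $t_A=t_B|_A$, one computes for $a\in A$ and $b\in B$ that $\langle\Gamma(E_A(b)),\Gamma(a)\rangle_{\mathcal{H}_A}=t_A(E_A(a^*b))=t_B(a^*b)=\langle\Gamma(b),\Gamma(a)\rangle_{\mathcal{H}_B}$, so that $\Gamma(b)-\Gamma(E_A(b))$ is orthogonal to $\Gamma(A)$, hence to $\mathcal{H}_A$. Thus the densely defined map $\Gamma(b)\mapsto\Gamma(E_A(b))$ is the restriction to $\Gamma(B)$ of the orthogonal projection $e_A$ of $\mathcal{H}_B$ onto $\mathcal{H}_A$, which is $(2)$. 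For $(3)$: since $A$ is $*$-closed in $B$, $J_B$ leaves $\mathcal{H}_A$ invariant and restricts there to $J_A$; then for $b\in B$, using that $E_A$ is $*$-preserving, $J_Be_AJ_B\Gamma(b)=J_B\Gamma(E_A(b^*))=J_B\Gamma(E_A(b)^*)=\Gamma(E_A(b))=e_A\Gamma(b)$, and density yields $J_Be_AJ_B=e_A$.

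The substantive part is $(4)$. The plan is to recognize $e_A$ as the Jones projection for the inclusion $\iota_A(M_A)\subseteq M_B$ of finite von Neumann algebras in standard form on $\mathcal{H}_B$ (recall that $\Omega$ is cyclic and separating for $M_B$ and that $t^{\lambda}_B$ is a trace, by Lemma \ref{l1}). First, a direct check on $\Gamma(B)$ using left $A$-linearity of $E_A$ shows that $e_A$ commutes with $\lambda_B(A)=\iota_A(\lambda_A(A))$, hence $e_A\in\iota_A(M_A)'$; in particular $\iota_A(M_A)$ leaves $\mathcal{H}_A$ invariant. Combining this with $(3)$, $e_A$ also commutes with $J_B\iota_A(M_A)J_B\subseteq M_B'$. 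Therefore, for $x\in M_B$ the operator $e_Axe_A$ commutes with $J_B\iota_A(M_A)J_B$; restricting to the invariant subspace $\mathcal{H}_A$ and using $J_B|_{\mathcal{H}_A}=J_A$, the operator $E_A(x):=(e_Axe_A)|_{\mathcal{H}_A}$ commutes with $J_AM_AJ_A=M_A'$ and so lies in $M_A$. One then verifies by routine computations that $x\mapsto E_A(x)$ is a $*$-preserving, contractive, unital, $\iota_A(M_A)$-bimodular map $M_B\to M_A$ restricting to the identity on $\iota_A(M_A)$, hence a conditional expectation, and that it is trace preserving, since $E_A(x)\Omega=e_Ax\Omega$ gives $t^{\lambda}_B(E_A(x))=\langle e_Ax\Omega,\Omega\rangle=\langle x\Omega,\Omega\rangle=t^{\lambda}_B(x)$. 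Uniqueness is the usual argument: any two trace-preserving conditional expectations $E,E'$ satisfy $t^{\lambda}_B((E(x)-E'(x))c)=0$ for all $c\in M_A$, and $t^{\lambda}_B|_{M_A}$ is faithful with $E(x)-E'(x)\in M_A$. That this $E_A$ extends the algebraic $E_A\colon B\to A$ follows from $E_A(\lambda_B(b))\Omega=e_A\Gamma(b)=\Gamma(E_A(b))=\iota_A(\lambda_A(E_A(b)))\Omega$ and separateness of $\Omega$ for $\iota_A(M_A)$. Finally, $E_A(x)\xi=e_Ax\xi$ for $\xi\in\mathcal{H}_A$ gives $\|E_A(x)\xi-E_A(x')\xi\|\le\|(x-x')\xi\|$, and similarly for adjoints, so $E_A$ is continuous for the $\text{SOT}^*$ topologies.

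For $(5)$: writing $\Gamma(a)=\iota_A(\lambda_A(a))\Omega$ and $c:=E_A(x)\lambda_A(a)\in M_A$, we have $E_A(x)\Gamma(a)=c\Omega$. On the other hand, by $(4)$ applied to $x^*$, $e_Ax^*\Omega=E_A(x^*)\Omega=E_A(x)^*\Omega$, whence $\lambda_A(a^*)e_Ax^*\Omega=(E_A(x)\lambda_A(a))^*\Omega=c^*\Omega$; applying $J_A$ and using $J_A(c^*\Omega)=c\Omega$ for $c\in M_A$ recovers $E_A(x)\Gamma(a)$, which is $(5)$. The only place where real care is needed is $(4)$, specifically the identification of $(e_Axe_A)|_{\mathcal{H}_A}$ with an element of $M_A$: this hinges on keeping straight the inclusion $\mathcal{H}_A\subseteq\mathcal{H}_B$, the relations $J_B|_{\mathcal{H}_A}=J_A$ and $M_B'=J_BM_BJ_B$, and the action of $\iota_A(M_A)$ on $\mathcal{H}_B$; the remaining verifications are bookkeeping.
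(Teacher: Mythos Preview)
Your proof is correct and, for parts (2)--(4), follows essentially the same approach as the paper's: identify $e_A$ as the orthogonal projection onto $\mathcal{H}_A$, check $J_Be_AJ_B=e_A$ on the dense subspace $\Gamma(B)$, and realize the conditional expectation via $e_Axe_A$ using the commutant relation $J_AM_AJ_A=M_A'$. The minor differences are in (1) and (5). For (1), the paper computes directly from the formula $\iota_A(x)\Omega=J_B(x^*\Omega)$ (coming from the explicit description of $\iota_A$ in \cite{KdySnd2009}), whereas you argue by normality and agreement on the dense subalgebra $\lambda_A(A)$; both are immediate. For (5), the paper proves the identity by showing the set $P$ of $x\in M_B$ satisfying it is $\text{SOT}^*$-closed and contains $\lambda_B(B)$, while you give a direct computation valid for all $x\in M_B$ at once, using $e_Ax^*\Omega=E_A(x)^*\Omega$ from (4) and the tracial identity $J_A(c^*\Omega)=c\Omega$ for $c\in M_A$. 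Your argument for (5) is shorter and avoids the density step; the paper's version has the virtue of not needing to have already established the full strength of (4) (only the $\text{SOT}^*$-continuity and the formula on $\lambda_B(B)$).
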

 \begin{proof}
  (1) First recall (see the proof of Lemma \ref{l1}) that for a finite pre-von Neumann algebra $B$, the faithful, normal, tracial state $t^{\lambda}_B$ is the restriction to the von Neumann algebra $M^{}_B$ of the linear functional $\widetilde{t_B}$
  on $\mathcal{B}({\mathcal{H}_B})$ defined  by $\widetilde{t_B}(x)=\langle x\Omega, \Omega\rangle$ for $x\in \mathcal{B}({\mathcal{H}_B})$. Note that $\widetilde{t_B}(\lambda_B(b))=t_B(b)$ for $b\in B$. Now, for any
  $x\in M^{}_A$ we have, $t^{\lambda}_B({\iota}_A(x))= \widetilde{t_B}({\iota}_A(x))= \langle {\iota}_A(x)\Omega,\Omega\rangle = \langle J_B(x^*\Omega),\Omega\rangle=\langle J_B\Omega, x^*\Omega\rangle= \langle x\Omega,\Omega\rangle = \widetilde{t_A}(x)= t^{\lambda}_A(x)$.
      
  (2) The continuous extension from $\mathcal{H}_B $ onto $\mathcal{H}_A$ of $E_A$ is defined by $e_A(\Gamma(b))=\Gamma(E_A(b))$ for $b\in B.$ We show that $e_A$ is nothing but the orthogonal projection onto the closed
  subspace $\mathcal{H}_A$ of $\mathcal{H}_B$. Observe that the range of the operator $e_A\in \mathcal{B}(\mathcal{H}_B)$ is precisely $\mathcal{H}_A.$ Since, $E_A$ is a $A-A$ bimodule retraction map for the inclusion of $A$ into $B$ it is clear that $e_A^2=e_A.$ 
Another routine calculation proves that  ${\langle e_A(\Gamma(b_1)),\Gamma(b_2)\rangle}_{\mathcal{H}_B} = {\langle \Gamma(b_1), e_A(\Gamma(b_2))\rangle}_{\mathcal{H}_B}$ and hence $e_A^*=e_A.$ This proves that $e_A$ is the orthogonal projection 
onto $\mathcal{H}_A$.

(3) First it should be clear that $J_B{\big |}_A= J_A.$ Now for any $b\in B$ we have  $J_Be_AJ_B(\Gamma(b))= J_Be_A(\Gamma(b^*))= J_B\Gamma(E_A(b^*))=\Gamma(E_A(b))= e_A(\Gamma(b)).$ Since $J_Be_AJ_B$ and $e_A$ agree
on a dense set we get the desired equality.

(4) To obtain a formula for a condition expectation from $M^{}_B$ onto $M^{}_A$ we follow the standard trick. As a first step we prove that
for any $x\in M^{}_B, e_Axe_A\in M^{}_Ae_A.$ The reader should observe that $e_A\in \big({M^{}_A}\big)^{\prime}$. Further, $J_AM^{}_AJ_A= (M^{}_A)^{\prime}$ (see the proof of Lemma 4.4 (item (2)) of \cite{KdySnd2009}).
Therefore, it is sufficient to prove that $ e_Axe_A\in (J_BM^{}_AJ_B)^{\prime}e_A.$ Indeed, it is routine to check that 
$ e_Axe_A\in (e_AJ_BM^{}_AJ_B e_A)^{\prime}$ and hence the conclusion follows. Suppose, $e_Axe_A = \widetilde{E}(x)e_A$ for some $\widetilde{E}(x)\in M^{}_A.$ Then, 
$\widetilde{E}(x)\in M^{}_A$ is uniquely determined since $\Omega$ is separating for $M^{}_A$ by Lemma \ref{l1}.
Define, $E^{}_A:M^{}_B\rightarrow M^{}_A$ by $E^{}_A(x)= \widetilde{E}(x)$ for $x\in M^{}_B.$ Next we show that for $y\in M^{}_A$, $t^{\lambda}_A(\widetilde{E}(x)y)=t^{\lambda}_B(x{\iota}_A(y)).$
This follows from the following array of equations.
\begin{align*}
  t^{\lambda}_A(\widetilde{E}(x)y)  & \qquad = \widetilde{t_A}(\widetilde{E}(x)y)\\
 &\qquad = {\langle \widetilde{E}(x)y\Omega,\Omega\rangle}_A\\
 &\qquad = {\langle \widetilde{E}(x)e_Ay\Omega,\Omega\rangle}_A\\
 &\qquad= {\langle e_Axe_Ay\Omega,\Omega\rangle}_B~~~~~~~~~~~~~~~[\textrm{Since}~~e_Axe_A=\widetilde{E}(x)e_A]\\
 &\qquad= {\langle x(y\Omega),\Omega\rangle}_B\\
 &\qquad= t^{\lambda}_B(x{\iota}_A(y)).
\end{align*}
Thus by \cite{Mgk1954} $E^{}_A$ is the unique  trace preserving conditional expectation from $M^{}_B$ onto $M^{}_A$ with respect to the trace $t^{\lambda}_B$.
We want to show that the conditional expectation map $E^{}_A:M^{}_B \mapsto M^{}_A$  is continuous for the $\text{SOT}^*$-topologies on the domain and range. Suppose, a net $\{x_{\alpha}\}$ converges to $x\in M^{}_B$ in $\text{SOT}^*$-topology. 
Take an arbitrary element $\xi\in \mathcal{H}_A$. There exists $\eta\in \mathcal{H}_B$ such that $e_A\eta=\xi.$ Now observe that, $E^{}_A(x_{\alpha})\xi=E^{}_A(x_{\alpha})e_A\eta=e_Ax_{\alpha}e_A\eta$. 
But since $x_{\alpha}$ converges to $x$ in $\text{SOT}^*$ topology we see that $e_Ax_{\alpha}e_A\eta$ converges to $e_Axe_A\eta$.
In other words, the map $E^{}_A(x_{\alpha})\xi$ converges to $E^{}_A(x)\xi$. Now the continuity of $E^{}_A$ in $\text{SOT}^*$ topology follows from the fact that $E^{}_A(x^*) = \big(E^{}_A(x)\big)^*.$

(5) Define 
$P= \{x\in M^{}_B: E^{}_A(x)(\Gamma(a))=J_A(\lambda_A(a^*)e_Ax^*\Omega) \forall a\in A\}.$
Simple calculations show that for each $b\in B$, $\lambda_B(b) \in P$.
In fact, for any $a \in A$, $J_A(\lambda_A(a^*)e_A\lambda_B(b)^*\Omega) = \Gamma(E_A(b)a) = 
E^{}_A(\lambda_B(b))(\Gamma(a))$.
\par Next we show that $P$ is a $\text{SOT}^*$ closed subspace of $M^{}_B$. For this, consider a net $\{x_{\alpha}\}\subseteq P$ converging to $x$ in $\text{SOT}^*$ topology.
As we have already seen that  $E^{}_A$ is $\text{SOT}^*$ continuous, $E^{}_A(x_{\alpha})$ also converges to $E^{}_A(x)$ in $\text{SOT}^*$ topology. Thus,
$E^{}_A(x_{\alpha})(\Gamma(a))\longrightarrow E^{}_A(x)(\Gamma(a))$. But since each $x_{\alpha}$ belongs to $P$ we see that $E^{}_A(x_{\alpha})(\Gamma(a))=
J_A(\lambda_A(a^*)e_A x^*_{\alpha}\Omega).$
But as $\{x_{\alpha}\}$ converges to $x$ in $\text{SOT}^*$ topology we see that $J_A(\lambda_A(a^*)e_Ax^*_{\alpha}\Omega)\longrightarrow J_A(\lambda_A(a^*)e_Ax^*\Omega).$ Therefore, $E^{}_A(x)(\Gamma(a))= J_A(\lambda_A(a^*)e_Ax^*\Omega).$
Thus we have proved that $P$ is a $\text{SOT}^*$ closed subspace of $M^{}_B$. Furthermore, $\lambda_B(B)\subseteq P.$ Hence, $P=M^{}_B.$
This completes the proof.
 \end{proof}
 Let us record two easy facts  for future reference.
 \begin{remark}\label{r1}
The inclusion map $\iota_A$ (of $M^{}_A$ into $M^{}_B$) is given by the formula $\iota_A(x)(\Gamma(b))=J_B\big(\lambda_B(b^*)x^*\Omega\big)$ for $x\in M^{}_A$ and $b\in B$. Therefore, $\iota_A(\lambda_A(a))=\lambda_B(a)$ for any $a\in A$. See \cite{KdySnd2009} for details.
\end{remark}
\begin{remark}\label{r2}
For any $b\in B$, $E^{}_A(\lambda_B(b))=\lambda_A(E_A(b)).$ This can be easily verified using the formula of $E^{}_A$ given in Theorem \ref{t1}.
  \end{remark}

\begin{definition}
 A \textbf{commuting square of finite pre-von Neumann algebras} is a quadruple \[\begin{array}{ccc}
  A_{10} & \subseteq &  A_{11}\\
\rotatebox{90}{$\subseteq$} & & \rotatebox{90}{$\subseteq$}\\
  A_{00} & \subseteq & A_{01}
\end{array}\] satisfying the following three properties:
\begin{itemize}
 \item each pair of inclusions in the quadruple is a compatible pair of finite pre-von Neumann algebras; that is, $t_{A_{11}}{\big{|}}_{A_{ij}}= t_{A_{ij}}$ for $i,j\in \{0,1\}$.
 \item there exist $*$-and trace-preserving $A_{ij}-A_{ij}$ bimodule retractions $E_{A_{ij}}$, corresponding to each $i,j\in\{0,1\}$, for the inclusion map of $A_{ij}$ into $A_{11}$.
 \item $E_{A_{10}} E_{A_{01}}(a_{11})=E_{A_{00}}(a_{11})= E_{A_{01}} E_{A_{10}}(a_{11})$ for $a_{11} \in A_{11}.$
 \end{itemize}
\end{definition}
\begin{theorem}\label{lambda} Consider a commuting square of finite pre-von Neumann algebras \[\begin{array}{ccc}
  A_{10} & \subseteq &  A_{11}\\
\rotatebox{90}{$\subseteq$} & & \rotatebox{90}{$\subseteq$}\\
  A_{00} & \subseteq & A_{01}
\end{array}.\]
 Following the notation of Theorem \ref{t1} the quadruple \[\begin{array}{ccc}
  M^{}_{A_{10}} & \subseteq &  M^{}_{A_{11}}\\
\rotatebox{90}{$\subseteq$} & & \rotatebox{90}{$\subseteq$}\\
  M^{}_{A_{00}} & \subseteq & M^{}_{A_{01}}
\end{array}\] is a commuting square of von Neumann algebras with respect to the inclusions ${\iota}_{A_{ij}}$ of $M^{}_{A_{ij}}$ into $M^{}_{A_{11}}$ and conditional expectations
$E^{}_{A_{ij}}:M^{}_{A_{11}}\rightarrow M^{}_{A_{ij}} .$
\end{theorem}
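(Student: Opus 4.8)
The strategy is to transport the commuting-square identity for the pre-von Neumann algebras through the completion functor of Theorem \ref{t1}, by a density argument. Throughout, realize all four von Neumann algebras concretely inside $M^{}_{A_{11}}\subseteq\mathcal{B}(\mathcal{H}_{A_{11}})$ via the normal inclusions $\iota_{A_{ij}}$ of Theorem \ref{t1}, so that $\mathcal{H}_{A_{ij}}\subseteq\mathcal{H}_{A_{11}}$ with common vacuum vector $\Omega$. By Theorem \ref{t1}(1) the trace $t^{\lambda}_{A_{11}}$ restricts to $t^{\lambda}_{A_{ij}}$ on each $M^{}_{A_{ij}}$, and by Theorem \ref{t1}(4) each $E^{}_{A_{ij}}\colon M^{}_{A_{11}}\to M^{}_{A_{ij}}$ is the unique $t^{\lambda}_{A_{11}}$-preserving conditional expectation onto $M^{}_{A_{ij}}$ and is $\text{SOT}^*$-continuous. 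Hence what must be shown is the single operator identity
\[
E^{}_{A_{10}}\circ E^{}_{A_{01}}\;=\;E^{}_{A_{00}}\;=\;E^{}_{A_{01}}\circ E^{}_{A_{10}}
\]
on $M^{}_{A_{11}}$, which is the condition defining a commuting square of von Neumann algebras relative to the common trace $t^{\lambda}_{A_{11}}$.

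Since each $E^{}_{A_{ij}}$ is $\text{SOT}^*$-continuous, so are the two compositions above; and $\lambda_{A_{11}}(A_{11})$ is an $\text{SOT}^*$-dense $*$-subalgebra of $M^{}_{A_{11}}=\lambda_{A_{11}}(A_{11})''$ by the bicommutant and Kaplansky density theorems. It therefore suffices to check the identity on elements $\lambda_{A_{11}}(b)$, $b\in A_{11}$. Combining Remarks \ref{r1} and \ref{r2}, for each pair $A_{ij}\subseteq A_{11}$ the map $\lambda_{A_{11}}$ intertwines the retraction $E_{A_{ij}}$ with $E^{}_{A_{ij}}$, that is, $E^{}_{A_{ij}}(\lambda_{A_{11}}(c))=\iota_{A_{ij}}(\lambda_{A_{ij}}(E_{A_{ij}}(c)))=\lambda_{A_{11}}(E_{A_{ij}}(c))$ for all $c\in A_{11}$. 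Applying this first for the pair $A_{01}\subseteq A_{11}$ and then for $A_{10}\subseteq A_{11}$, and then invoking the commuting-square relation $E_{A_{10}}E_{A_{01}}=E_{A_{00}}$ for the finite pre-von Neumann algebras, gives
\[
E^{}_{A_{10}}\big(E^{}_{A_{01}}(\lambda_{A_{11}}(b))\big)=\lambda_{A_{11}}\big(E_{A_{10}}E_{A_{01}}(b)\big)=\lambda_{A_{11}}\big(E_{A_{00}}(b)\big)=E^{}_{A_{00}}(\lambda_{A_{11}}(b)),
\]
and the symmetric computation (using $E_{A_{01}}E_{A_{10}}=E_{A_{00}}$) yields $E^{}_{A_{01}}\circ E^{}_{A_{10}}=E^{}_{A_{00}}$ on $\lambda_{A_{11}}(A_{11})$. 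Passing to $\text{SOT}^*$-limits completes the argument.

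Because essentially all the analytic content --- extending $E_A$ to the von Neumann-algebra conditional expectation $E^{}_A$ and establishing its $\text{SOT}^*$-continuity --- was already packaged into Theorem \ref{t1}, the only point that genuinely needs care here is the compatibility of identifications: one must make sure that the $E^{}_{A_{ij}}$, defined a priori for the separate pairs $A_{ij}\subseteq A_{11}$, are being composed inside the single ambient algebra $M^{}_{A_{11}}$, and that Remark \ref{r2} is applied in the correct pair at each step (so that, for instance, $E^{}_{A_{10}}$ is legitimately applied to $\lambda_{A_{11}}(E_{A_{01}}(b))$, which is meaningful because $E_{A_{01}}(b)\in A_{01}\subseteq A_{11}$). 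Once those identifications are fixed consistently via $\iota_A\circ\lambda_A=\lambda_B$ as in Remark \ref{r1}, there is nothing further to do; in particular the factor and non-degeneracy hypotheses used elsewhere in the paper play no role at this level. One could equally run the argument on $\mathcal{H}_{A_{11}}$ with the projections $e_{A_{ij}}$ of Theorem \ref{t1}(2), checking $e_{A_{10}}e_{A_{01}}=e_{A_{00}}=e_{A_{01}}e_{A_{10}}$ by the same density computation and then reading off the conditional-expectation statement from the relation $e_{A_{ij}}xe_{A_{ij}}=E^{}_{A_{ij}}(x)e_{A_{ij}}$ established in the proof of Theorem \ref{t1}(4).
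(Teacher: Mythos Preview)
Your proof is correct and follows essentially the same approach as the paper's: both verify the commuting-square identity on the $\text{SOT}^*$-dense set $\lambda_{A_{11}}(A_{11})$ using Remarks \ref{r1} and \ref{r2}, and then extend by $\text{SOT}^*$-continuity of the inclusions and conditional expectations established in Theorem \ref{t1}. The paper phrases the density step as ``the set $Q$ of $x$ satisfying the identity is $\text{SOT}^*$-closed and contains $\lambda_{A_{11}}(A_{11})$'', while you argue directly via density and continuity of the compositions, but the content is identical; your extra care in tracking the identifications through $\iota_{A_{ij}}$ is well placed and matches exactly how the paper uses the $\iota$'s explicitly in its displayed equation.
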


\begin{proof}
 To see that the  quadruple of von Neumann algebras is a commuting square the equation needed to be verified (for any $x\in M^{}_{A_{11}}$)  is the following:
 $${\iota}_{A_{10}} E^{}_{A_{10}} {\iota}_{A_{01}} E^{}_{A_{01}}(x) = {\iota}_{A_{00}}E^{}_{A_{00}}(x)= {\iota}_{A_{01}} E^{}_{A_{01}}{\iota}_{A_{10}} E^{}_{A_{10}}(x).$$
 Define, $Q= \{x\in  M^{}_{A_{11}}:{\iota}_{A_{10}} E^{}_{A_{10}} {\iota}_{A_{01}} E^{}_{A_{01}}(x) = {\iota}_{A_{00}}E^{}_{A_{00}}(x)= {\iota}_{A_{01}} E^{}_{A_{01}} {\iota}_{A_{10}} E^{}_{A_{10}}(x) \}.$

  Now using Remarks \ref{r1} and Remarks \ref{r2} it follows easily that for any $\lambda_{A_{11}}(a_{11})\in \lambda_{A_{11}}(A_{11})\subseteq M^{}_{A_{11}}$ the following equations hold:
\begin{eqnarray*}
{\iota}_{A_{10}} E^{}_{A_{10}}{\iota}_{A_{01}}E^{}_{A_{01}}(\lambda_{A_{11}}(a_{11})) &=& \lambda_{A_{11}}(E_{A_{10}}E_{A_{01}}(a_{11})), \\
{\iota}_{A_{01}} E^{}_{A_{01}}{\iota}_{A_{10}}E^{}_{A_{10}}(\lambda_{A_{11}}(a_{11})) &=& \lambda_{A_{11}}(E_{A_{01}}E_{A_{10}}(a_{11})), {\text {and}}\\
{\iota}_{A_{00}}E^{}_{A_{00}}({\lambda}_{A_{11}}(a_{11})) &=& {\lambda}_{A_{11}}(E_{A_{00}}(a_{11})).
\end{eqnarray*}
Since by assumption we have that $E_{A_{10}} E_{A_{01}}(a_{11})=E_{A_{00}}(a_{11})= E_{A_{01}} E_{A_{10}}(a_{11})$ for $a_{11}\in A_{11}$
we conclude that ${\lambda}_{A_{11}}(A_{11}) \subseteq Q.$ Furthermore, since each ${\iota}_{A_{ij}}$ and each $E^{}_{A_{ij}}$ is $\text{SOT}^*$-continuous we conclude that
$Q$ is an $\text{SOT}^*$-closed subspace of $M_{A_{11}}$. Therefore, $Q=M^{}_{A_{11}}$. This completes the proof.
\end{proof}

\section{Planar subalgebras and commuting squares}
In this section we assume that $Q$ is a $*$-planar subalgebra of a subfactor planar algebra $P$ of modulus $\delta >1$. 
Let us denote by $M^{}_k(Q)$ the $II_1$-factor of section \S 2 corresponding to $F_k(Q)$.

We first observe some simple properties of conditional expectation-like maps associated to a compatible pair of finite pre-von Neumann algebras in the following lemma whose proof is simple and omitted.

\begin{lemma}\label{conditype1}
 Let $(A,t_A)\subseteq (B,t_B)$ be a compatible pair of finite pre-von Neumann algebras.
 Given an element $b\in B$ suppose there is  an element $a \in A$, 
 such that for any $c\in A$ the following holds true:
 \begin{equation}\label{trace}
  t_A(ac)= t_B(bc).
 \end{equation}
 Then, $a$ is necessarily unique, and denoting it by $E_A(b)$, the following relations hold:
 \begin{enumerate}
  \item $E_A(a)=a$ for all $a\in A$;
  \item $E_A(b^*)={E_A(b)}^*$; 
  \item $E_A(a_1ba_2)=a_1E_A(b)a_2$ for $a_1,a_2\in A$ and $b\in B$.\qed
 \end{enumerate}
\end{lemma}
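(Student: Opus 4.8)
The plan is to deduce everything directly from the defining trace identity \eqref{trace}, using only that $t_A$ and $t_B$ are traces, that $t_B$ restricts to $t_A$ on $A$, and that the form $\langle x,y\rangle_A = t_A(y^*x)$ is a positive-definite inner product on $A$ (part of the definition of a finite pre-von Neumann algebra).

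For uniqueness, if $a,a'\in A$ both satisfy $t_A(ac)=t_B(bc)=t_A(a'c)$ for all $c\in A$, then putting $c=(a-a')^*$ and using that $t_A$ is a trace gives $\langle a-a',a-a'\rangle_A = t_A\big((a-a')(a-a')^*\big)=0$, whence $a=a'$. So once such an $a$ exists it is legitimately named $E_A(b)$, and each of (1)--(3) is proved simply by exhibiting the relevant element of $A$ and checking that it satisfies \eqref{trace} for the appropriate element of $B$.

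For (1), compatibility gives $t_B(ac)=t_A(ac)$ for $a,c\in A$, so $a$ itself satisfies the identity and $E_A(a)=a$. For (2), I claim $E_A(b)^*$ does the job for $b^*$: for $c\in A$, using $t_A(x^*)=\overline{t_A(x)}$ (immediate from Hermitian symmetry of $\langle\cdot,\cdot\rangle_A$, since $t_A(x)=\langle x,1\rangle_A$), the trace property of $t_A$, the identity \eqref{trace} applied to $c^*\in A$, and finally the trace property of $t_B$,
\[
 t_A\big(E_A(b)^*c\big)=\overline{t_A\big(c^*E_A(b)\big)}=\overline{t_A\big(E_A(b)c^*\big)}=\overline{t_B(bc^*)}=t_B\big((bc^*)^*\big)=t_B(b^*c);
\]
in particular the hypothesis of the lemma also holds for $b^*$, and uniqueness yields $E_A(b^*)=E_A(b)^*$. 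For (3), given $a_1,a_2\in A$ note that $a_2ca_1\in A$ for $c\in A$, so by the trace properties of $t_A$ and $t_B$ together with \eqref{trace},
\[
 t_A\big(a_1E_A(b)a_2c\big)=t_A\big(E_A(b)\,(a_2ca_1)\big)=t_B\big(b\,(a_2ca_1)\big)=t_B(a_1ba_2c),
\]
so $a_1E_A(b)a_2$ is the required element and $E_A(a_1ba_2)=a_1E_A(b)a_2$.

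There is essentially no obstacle here; the only points worth keeping in mind are that positive-definiteness of $\langle\cdot,\cdot\rangle_A$ is exactly what forces uniqueness, and that the conjugate-symmetry identity $t_A(x^*)=\overline{t_A(x)}$ is the one small ingredient needed in (2). Boundedness of the left multiplication maps plays no role in this lemma.
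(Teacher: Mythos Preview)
Your proof is correct. The paper in fact omits the proof of this lemma entirely (note the \verb|\qed| appended to the statement and the remark ``whose proof is simple and omitted''), so there is nothing to compare against; your argument supplies precisely the routine verification the authors had in mind, using only the trace property, compatibility $t_B|_A=t_A$, and positive-definiteness of the trace-induced form.
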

%
We prove next that the compatible pair of finite pre-von Neumann algebras $(F_1(Q),t_1)\subseteq (F_1(P),t_1)$ always admits a conditional expectation-like map.
\begin{theorem}
 \label{conditype2}
For the compatible pair of finite pre-von Neumann algebras $$(F_1(Q),t_1)\subseteq (F_1(P),t_1),$$
there exists a $*$-preserving and $t_1$-preserving  $F_1(Q)-F_1(Q)$-bimodule retraction,
say $E$, for the usual inclusion of $F_1(Q)$ into $F_1(P).$
\end{theorem}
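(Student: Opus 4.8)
The plan is to construct $E\colon F_1(P)\to F_1(Q)$ componentwise using the inner-product structure on $F_1(P)$, and then to verify the bimodule-retraction properties pictorially. Recall from \S2 that $F_1(P)=\bigoplus_{n\geq 1}P_n$ as a vector space, with inner product $\langle a,b\rangle = t_1(b^{\dagger}\# a)$, and that $t_1(a)=\tau(a_1)$ picks out the trace of the $P_1$-component. Since $Q$ is a $*$-planar subalgebra of $P$, each $Q_n$ is a $*$-subalgebra of $P_n$ carrying the same trace, so the inclusion $F_1(Q)\subseteq F_1(P)$ is isometric and $\mathcal{H}_1(Q)=\bigoplus_{n\geq 1}Q_n$ sits inside $\mathcal{H}_1(P)=\bigoplus_{n\geq 1}P_n$ as a closed subspace. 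I would first observe that the map that is forced on us is the orthogonal projection: define $E$ to be the restriction to $F_1(P)$ of the orthogonal projection $\mathcal{H}_1(P)\to\mathcal{H}_1(Q)$, equivalently $E = \bigoplus_{n\geq 1}E^{P_n}_{Q_n}$ where $E^{P_n}_{Q_n}\colon P_n\to Q_n$ is the $\tau$-preserving conditional expectation of the finite-dimensional $C^*$-algebras. Since each $P_n$ is finite-dimensional and $F_1(P)$-elements have only finitely many nonzero components, $E$ maps $F_1(P)$ into $F_1(Q)$, and it is clearly a $t_1$-preserving retraction onto $F_1(Q)$ (it fixes $F_1(Q)$ pointwise, and $t_1(E(a))=\tau(E^{P_1}_{Q_1}(a_1))=\tau(a_1)=t_1(a)$).

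Next I would check the $*$-preservation: $E(a^{\dagger})=E(a)^{\dagger}$. Since $\dagger = Z_{R^k}\circ{*}$ acts within each $P_n$ and the rotation tangle and the componentwise $*$ both commute with the conditional expectations $E^{P_n}_{Q_n}$ — because $Q$ is closed under the planar-algebra action, hence under rotation, and $E^{P_n}_{Q_n}$ is $*$-preserving as a conditional expectation of $C^*$-algebras — this follows from component-by-component bookkeeping. The real content is the $F_1(Q)$-bimodule property $E(x\# a\# y)=x\# E(a)\# y$ for $x,y\in F_1(Q)$, $a\in F_1(P)$. By bilinearity it suffices to take $x=x_p\in Q_p$, $a=a_m\in P_m$, $y=y_q\in Q_q$, and then $x\#a\#y = \sum_t (x\#a\#y)_t$ where each component $(x\#a\#y)_t\in P_t$ is the picture obtained by stacking the three boxes as in Figure \ref{fig:mult}. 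The claim amounts to: applying $E^{P_t}_{Q_t}$ to this picture equals the picture with $a_m$ replaced by $E^{P_m}_{Q_m}(a_m)$. Because the multiplication tangle surrounding $a$ uses only strings and the boxes $x,y$ which already lie in $Q$, and the conditional expectation onto a planar subalgebra can itself be implemented by an (element of a) planar-algebra operation — concretely, one uses that $Z_T^P$ restricted to inputs from $Q$ lands in $Q$, so $Z_T^P$ composed with $E^{P_{\bullet}}_{Q_{\bullet}}$ "factors through" $Q$ — the equality reduces to the single-box identity $E^{P_t}_{Q_t}\circ Z_T^P = Z_T^P\circ E^{P_m}_{Q_m}$ for the relevant multiplication tangle $T$ with distinguished input slot $a$ and all other slots filled from $Q$.

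The step I expect to be the main obstacle is precisely this last interchange of $E^{P_{\bullet}}_{Q_{\bullet}}$ with the multiplication tangle. The cleanest route, which I would adopt, is to avoid an ad hoc pictorial argument and instead pass through the von Neumann algebra completions: Theorem \ref{gjsks} realizes $M_1(P)\supseteq M_1(Q)$ as a finite-index inclusion of $II_1$-factors (both extremal), with a unique trace-preserving conditional expectation $\mathcal{E}\colon M_1(P)\to M_1(Q)$; one then shows that $\mathcal{E}$ restricts to $F_1(P)$ with range $F_1(Q)$, and that this restriction coincides with the orthogonal projection $E$ defined above (both are $t_1$-preserving projections of $F_1(P)$ onto $F_1(Q)$, and such a projection is unique by Lemma \ref{conditype1} applied componentwise, or by the uniqueness clause of \cite{Mgk1954}). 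Since $\mathcal{E}$ is automatically an $M_1(Q)$-bimodule map, its restriction $E$ is an $F_1(Q)$-bimodule map, giving the retraction property for free; the $*$-preservation likewise descends from $\mathcal{E}$. The one thing requiring care here is checking that $\mathcal{E}(F_1(P))\subseteq F_1(Q)$ — i.e. that the conditional expectation does not leave the dense $*$-subalgebra — which one verifies by noting that $\mathcal{E}$ is the orthogonal projection onto $\overline{F_1(Q)}=\mathcal{H}_1(Q)$ (by Theorem \ref{t1}(2,4) this is how $\mathcal{E}$ is built), and the orthogonal projection $\mathcal{H}_1(P)\to\mathcal{H}_1(Q)$ is exactly the componentwise $\bigoplus_n E^{P_n}_{Q_n}$, which manifestly preserves the algebraic direct sum $F_1(P)$.
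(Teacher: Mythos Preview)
Your componentwise definition $E=\bigoplus_{n\ge 1}E^{P_n}_{Q_n}$ is exactly the map the paper uses, and your ``approach (a)'' sketch is essentially sound. But the route you say you would actually adopt---passing through the von Neumann completions---has two problems. First, the claim that Theorem~\ref{gjsks} gives $M_1(Q)\subseteq M_1(P)$ as a \emph{finite-index} inclusion is unfounded: that theorem concerns the horizontal inclusions $M_0(P)\subseteq M_1(P)$ and $M_0(Q)\subseteq M_1(Q)$, and the vertical inclusion $M_1(Q)\subseteq M_1(P)$ is typically of infinite index. Second, and more seriously, your appeal to Theorem~\ref{t1}(2),(4) is circular: that theorem takes as a \emph{hypothesis} the existence of a $*$- and trace-preserving $A$--$A$ bimodule retraction $E_A\colon B\to A$, which is precisely the content of Theorem~\ref{conditype2}. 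In the paper's logical order, the von Neumann inclusion $M_1(Q)\hookrightarrow M_1(P)$ and the extended conditional expectation are built \emph{from} the retraction $E$, not the other way around.

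The paper's own argument is shorter than either of your routes and is worth comparing. Rather than verifying the $*$- and bimodule properties directly, the paper checks the single trace identity
\[
t_1\bigl(E(x)\# y\bigr)=t_1\bigl(x\# y\bigr)\qquad\text{for all }y\in F_1(Q),
\]
which unwinds (via the definition of $\#$ and of $t_1$) to the pictorial statement $\tau\bigl(E_{Q_n}(x_n)\,Z_{R^{n-1}}(y_n)\bigr)=\tau\bigl(x_n\,Z_{R^{n-1}}(y_n)\bigr)$; this holds because $Z_{R^{n-1}}(y_n)\in Q_n$ (planar subalgebra) and $E_{Q_n}$ is the trace-preserving conditional expectation. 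Lemma~\ref{conditype1} then delivers retraction, $*$-preservation, and the bimodule property all at once. Your interchange identity $E^{P_t}_{Q_t}\circ Z_T^P = Z_T^P\circ E^{P_m}_{Q_m}$ (with the other slots filled from $Q$) is in fact correct, and its proof is the same ``pass to traces and rotate the $Q$-inputs around'' computation---so had you stuck with approach (a) you would have rediscovered the paper's argument, just organized less economically.
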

\begin{proof}
First observe that there exists a conditional expectation $E_{Q_n}$ from $P_n$ onto $Q_n$ such that
$$\tau(E_{Q_n}(x_n)y_n)=\tau(x_ny_n)~~~ \text{for all}~~~ x_n\in P_n, y_n \in Q_n.$$

Define $E: F_1(P) \rightarrow F_1(Q)$ as follows. 
For $x=(x_1,x_2,\cdots) \in F_1(P)$, set  $E(x)= (E_{Q_1}(x_1),E_{Q_2}(x_2),\cdots) \in F_1(Q).$
Next we claim that the following equation holds true
for all $(y_1,y_2,\cdots)\in F_1(Q)$.
\begin{equation}\label{a}
t_1((E_{Q_1}(x_1),E_{Q_2}(x_2),\cdots)\# (y_1,y_2,\cdots))=t_1((x_1,x_2,\cdots)\# (y_1,y_2,\cdots)).
\end{equation}

Here, by definition of the trace $t_1$, the left hand side is the trace of the $Q_1$ component of $(E_{Q_1}(x_1),E_{Q_2}(x_2),\cdots)\# (y_1,y_2,\cdots)$ while the right hand side is the trace of the $P_1$ component of $(x_1,x_2,\cdots)\# (y_1,y_2,\cdots).$
A little computation using the definition of the product $\#$ shows that Equation \ref{a} will follow once the  equation in Figure \ref{fig:treq} holds for
all $x_n \in P_n$ and $y_n \in Q_n$.
\begin{figure}[!h]
\begin{center}
\psfrag{x}{\tiny $x_n$}
\psfrag{y}{\tiny $y_n$}
\psfrag{eqx}{\tiny $E_{Q_n}(x_n)$}
\psfrag{2km2}{\tiny $2n-2$}
\includegraphics[scale=0.6]{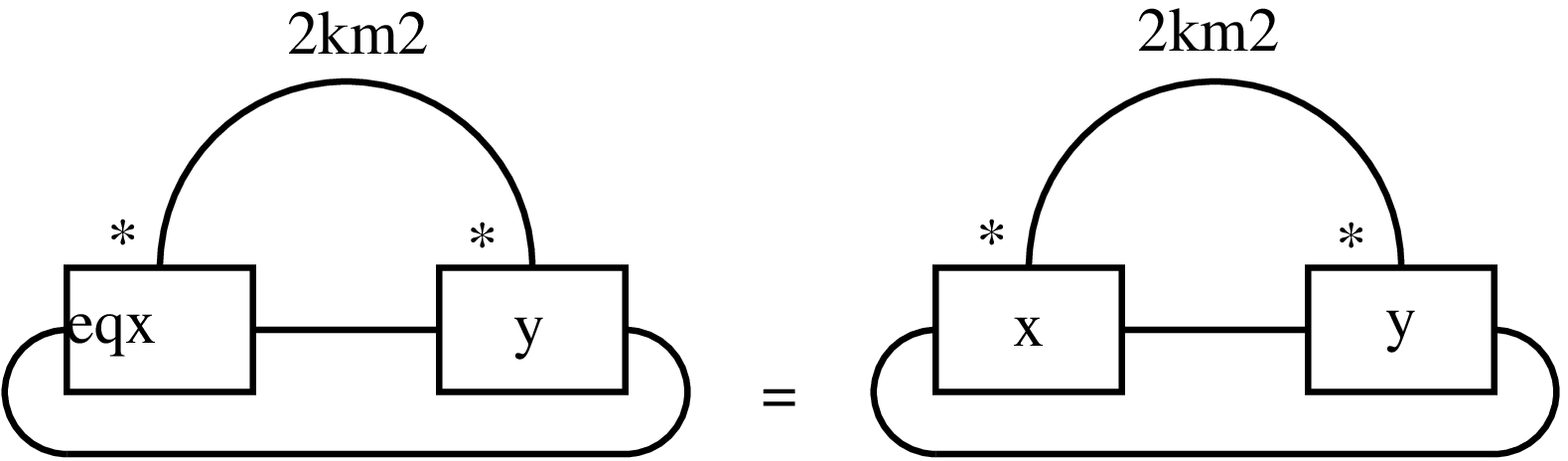}
\caption{}
\label{fig:treq}
\end{center}
\end{figure}

The left and right hand sides of this figure represent the traces of $E_{Q_n}(x_n)$ and $x_n$ against $Z_{R^{n-1}}(y_n)$ respectively. However since $Q$ is a planar subalgebra of $P$,
$Z_{R^{n-1}}(y_n) \in Q_n$, and by definition of the conditional expectation $E_{Q_n}$, the desired equality holds.

Finally, we appeal to Lemma \ref{conditype1} to complete the proof.
\end{proof}
In the following theorem we provide an example of a commuting square of finite pre-von Neumann algebras arising from a $*$-planar subalgebra.
\begin{theorem}\label{f}
Let $Q$ be a $*$-planar subalgebra of a subfactor planar algebra $P$ of modules $\delta > 1$.
 The following quadruple, call it $\mathscr{F}$, is a commuting square of finite pre-von Neumann algebras:
 $$\begin{matrix}
F_0(P) &\subseteq & F_1(P)\cr
\rotatebox{90}{$\subseteq$} &\ &\rotatebox{90}{$\subseteq$}\cr
F_0(Q)&\subseteq & F_1(Q).
\end{matrix}$$
\end{theorem}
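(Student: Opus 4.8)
The plan is to verify the three defining conditions of a commuting square of finite pre-von Neumann algebras for the quadruple $\mathscr{F}$, taking the ambient algebra to be $F_1(P)$. The first condition — that each inclusion is a compatible pair — is immediate: all four algebras carry the restrictions of the trace $t_1$ on $F_1(P)$ (noting $F_0(P) \subseteq F_1(P)$ and $F_0(Q) \subseteq F_1(Q)$ are trace-preserving inclusions by the construction recalled in \S2, and $F_0(Q) \subseteq F_0(P)$, $F_1(Q) \subseteq F_1(P)$ are trace-preserving since $Q$ is a planar subalgebra of $P$). For the second condition I need the four bimodule retractions onto $F_1(P)$: the retraction $E_{F_0(P)}: F_1(P) \to F_0(P)$ is the map $E_0$ of \S2 (the conditional-expectation-like map of the GJS construction); the retraction $E_{F_1(Q)}: F_1(P) \to F_1(Q)$ is exactly the map $E$ produced in Theorem \ref{conditype2}; and the retraction onto $F_0(Q)$ is obtained by composing. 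So the real content is the third condition, the commuting-square identity.

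For the commuting-square identity I would show
\[
E_0 \circ E = E \circ E_0 \quad \text{on } F_1(P),
\]
and that this common map is the retraction onto $F_0(Q)$. The cleanest route is to use the uniqueness clause of Lemma \ref{conditype1}: it suffices to check, for $x \in F_1(P)$ and $c \in F_0(Q)$, that $t_0\big((E_0 \circ E)(x)\,\#\, c\big) = t_1(x \,\#\, c)$ and the same with $E \circ E_0$ in place of $E_0 \circ E$; since both sides then agree with the unique $E_{F_0(Q)}(x)$, the two composites coincide with it and with each other. Unwinding the definition of $t_1$ and of $\#$ as in the proof of Theorem \ref{conditype2}, each such trace identity reduces — componentwise, with $x = x_n \in P_n$ and $c = c_m \in Q_m$ — to a pictorial identity about closing up the tangle of Figure \ref{fig:mult} against $Z_{R^k}(\cdot)$. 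The key pictorial observation is that the GJS conditional expectation $E_0$ (Figure \ref{fig:condexp}) acts by capping off strands in a region of the diagram that is disjoint from the region where the fiberwise conditional expectations $E_{Q_n}$ act (namely on the box labelled $x_n$), so the two operations manifestly commute at the level of tangles; and since $Z_T^P$ restricted to $Q$-labelled inputs equals $Z_T^Q$ (planar subalgebra), capping against $Q$-elements stays inside $Q$. Together with Theorem \ref{conditype2}'s trace computation this gives all three required identities simultaneously.

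The main obstacle — really the only place where care is needed — is the bookkeeping in the pictorial reduction: one must keep track of which $P_t$-component of a $\#$-product contributes to the $P_1$ (resp.\ $Q_1$) component of the relevant product, match the strand counts in Figures \ref{fig:mult}, \ref{fig:invol}, and \ref{fig:condexp}, and confirm that applying $\delta E_0$ before or after $E_{Q_n}$ yields literally the same tangle with the same $\delta$-powers. Once one sees that $E_0$ only touches the ``outer'' cups and $E_{Q_n}$ only touches the ``inner'' box, the commutation is visually obvious and the trace identity follows exactly as in Theorem \ref{conditype2}, so I expect the proof to be short modulo drawing one or two figures. Finally I would remark that the identity $E_0 \circ E = E \circ E_0 = E_{F_0(Q)}$ also shows $\mathscr{F}$ is non-degenerate/smooth enough to feed into the GJS machinery of \S2, which is the point of isolating it here.
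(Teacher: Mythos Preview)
Your approach is essentially the paper's: verify compatibility, invoke $E_0$ and the $E$ of Theorem~\ref{conditype2} as the needed retractions, and reduce the commuting-square identity to a pictorial trace computation that holds because $Q$ is a planar subalgebra, exactly parallel to the proof of Theorem~\ref{conditype2}. The paper argues by writing down the two tangles representing $E\circ E_0$ and $E_0\circ E$ on an element $x_n\in P_n$ and verifying they agree by testing their traces against $y_n\in Q_n$; you argue by showing both composites satisfy the trace characterization of $E_{F_0(Q)}$ from Lemma~\ref{conditype1}. These are the same computation in slightly different packaging.

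Two small points of precision. First, your phrasing ``the two operations manifestly commute at the level of tangles'' is a bit loose: $E_{Q_n}$ is \emph{not} a tangle operation (it is the trace-orthogonal projection $P_n\to Q_n$), so there is no literal tangle commutation; what is true is that after pairing against an element of $Q_n$ the $E_{Q_n}$ drops out, and then tangles can be rearranged---which is exactly the trace argument you go on to describe and which the paper carries out. Second, to invoke the uniqueness clause of Lemma~\ref{conditype1} you need both composites to take values in $F_0(Q)$. For $E_0\circ E$ this is immediate (the capping tangle preserves $Q$), but for $E\circ E_0$ it is not a priori clear that the result lies in $F_0(P)$ (equivalently, that $E$ restricted to $F_0(P)$ lands in $F_0(Q)$). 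The paper sidesteps this by noting $F_0(P)\cap F_1(Q)=F_0(Q)$ and proving the equality $E\circ E_0=E_0\circ E$ directly, from which the correct codomain follows. Either add that one observation or rephrase along the paper's lines; the underlying pictorial computation is the same.
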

\begin{proof}
 That each inclusion of finite pre-von Neumann algebras in the quadruple $\mathscr{F}$ is a compatible pair is obvious. Also, we have a
 $*$- and $\text{trace}$-preserving $F_0(P)-F_0(P)$  bimodule map $E_0 : F_1(P)\rightarrow F_0(P)$ which is  a retraction for the inclusion
 of $F_0(P)$ into $F_1(P).$ Moreover, by Theorem \ref{conditype2} there exists a $*$- and $t_1$- preserving
 bimodule map $E = E^{F_1(P)}_{F_1(Q)}$ which is also a retraction for the inclusion of $F_1(Q)$ into $F_1(P)$.
 As $F_0(P)\cap F_1(Q)= F_0(Q)$,  to show that $\mathscr{F}$ is a commuting square of finite pre-von Neumann algebras,  it suffices to show that the following equation holds:\
 \begin{equation*}\label{b}
  (E \circ E_0) (x_n)= (E_0\circ E) (x_n)~~~~~\forall~x_n \in P_n\subseteq F_1(P).
 \end{equation*}
 Computing with the definitions of $E$ and $E_0$, we see that it suffices to verify the pictorial equation on the left of Figure \ref{fig:condexpeq} for all $x_n \in P_n$, or equivalently, that for all $y_n \in Q_n$, the two elements of $P_n$ on the right of 
 Figure \ref{fig:condexpeq} have the same trace.
 \begin{figure}[!h]
\begin{center}
\psfrag{xn}{\tiny $x_n$}
\psfrag{yn}{\tiny $y_n$}
\psfrag{E}{\tiny $E_{Q_n}$}
\psfrag{eqnxn}{\tiny $E_{Q_n}(x_n)$}
\psfrag{2nm2}{\tiny $2n-2$}
\psfrag{nm2}{\tiny $n-2$}
\includegraphics[scale=0.4]{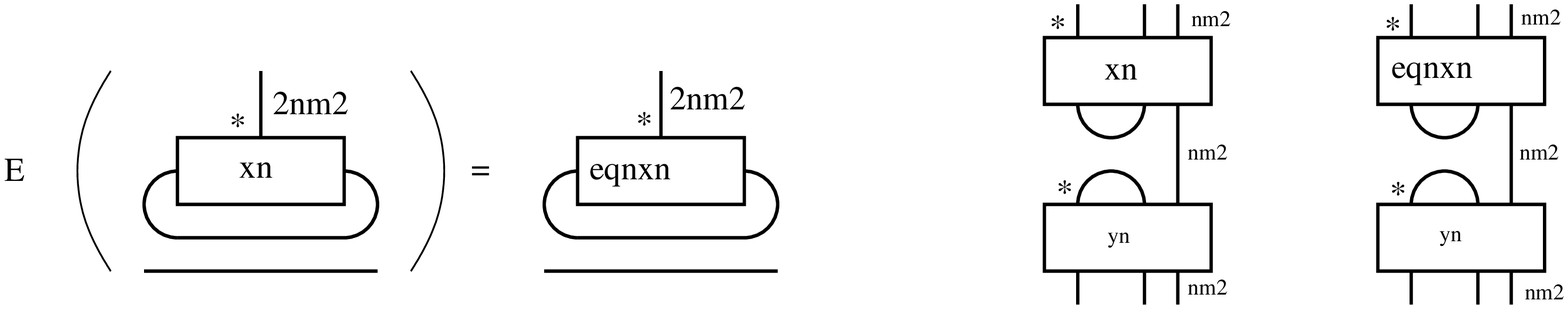}
\caption{}
\label{fig:condexpeq}
\end{center}
\end{figure}

Finally, this equality of traces holds since $Q$ is a planar subalgebra of $P$, just as in the proof of Theorem \ref{conditype2}.
\end{proof}

Before we state and prove the main result of this section, we need a lemma which
also follows from Theorem 7.1 of \cite{SnoWtn1994}. For completeness we sketch a simple proof.

\begin{lemma}\label{sym}
 Consider a commuting square ${\mathcal C}$ of type $II_1$ factors: 
$$
\begin{matrix}
L &\subseteq & M\cr
\rotatebox{90}{$\subseteq$} &\ &\rotatebox{90}{$\subseteq$}\cr
N &\subseteq & K
\end{matrix}
$$ 
with $[K:N]=[M:L]$. Then it is a nondegenerate commuting square, i.e., $\overline{\text{span}\{KL\}}= M = \overline{\text{span}\{LK\}}.$
\end{lemma}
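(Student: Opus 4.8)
The plan is to use the basic-construction machinery together with the index formula to force non-degeneracy. Recall the standard fact (due to Pimsner--Popa, and also derivable from Jones's index theory) that for any commuting square $\mathcal{C}$ as above, one always has the Pimsner--Popa-type inequality expressing that $L$ is ``large'' inside $M$ relative to $N$; more precisely, if $e_N \in \mathcal{B}(L^2(K))$ denotes the Jones projection for $N \subseteq K$, then the commuting square condition says exactly $e_N x e_N = E_N^M(x) e_N$ for $x \in M$ (where we view everything inside $K$), and consequently the inclusion $L \subseteq \langle L, e_N\rangle$ reproduces, up to a copy, the basic construction of $N\subseteq K$. The key numerical identity I would invoke is that the index $[M:L]$ dominates $[\,\overline{\mathrm{span}\{LK\}}\text{-generated algebra} : L\,]$, and that the latter equals $[K:N]$ precisely when the square is non-degenerate; since by hypothesis $[M:L] = [K:N]$, the two agree, which will be exactly the non-degeneracy statement $\overline{\text{span}\{LK\}} = M$.

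Concretely, first I would pass to $L^2(M)$ and let $e_L$ be the Jones projection $L^2(M) \to L^2(L)$, noting $M_1 := \langle M, e_L\rangle$ has $[M_1:M] = [M:L]$. The commuting square condition gives $E_L^M = E_L^K|_M$, i.e. $e_L$ restricted to $L^2(K) \subseteq L^2(M)$ agrees with the Jones projection $e_L^K$ for $L \subseteq K$ (using the trace-compatibility). Set $K_1 := \langle K, e_L^K \rangle \subseteq M_1$; then $[K_1:K] = [K:L]$. Now the algebra $\langle K, e_L\rangle$ generated inside $M_1$ contains $M$ (since $K \supseteq N$ and $e_L$ implements $E_L^M$ appropriately) — more to the point, I would show $N_1 := \langle N, e_L\rangle \subseteq M_1$ satisfies $[\langle M, N_1\rangle : M]$-type bounds — and compute $[\langle M, e_L^K\rangle : M]$. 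The cleanest route: show $\overline{\text{span}\{LK\}}$ is a closed subspace of $L^2(M)$ that is an $L$-$K$ bimodule, and that the associated ``intermediate'' von Neumann algebra it generates, call it $M_0$, satisfies $[M_0 : L] = [K:N]$ by a direct Pimsner--Popa basis argument (a Pimsner--Popa basis of $K$ over $N$ gives, via the commuting square, a Pimsner--Popa basis of $M_0$ over $L$). Since $M_0 \subseteq M$ and $[M:L] = [K:N] = [M_0:L]$, and index is multiplicative over intermediate subfactors (here using extremality/finiteness so that $[M:M_0][M_0:L] = [M:L]$ forces $[M:M_0] = 1$), we get $M_0 = M$. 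The symmetric statement for $\overline{\text{span}\{LK\}}$ versus $\overline{\text{span}\{KL\}}$ follows by applying the same argument to the ``opposite'' square, or by taking adjoints and using that $\mathcal{C}$ being a commuting square is a left-right symmetric condition (it is equivalent to $E_N^K E_L^K = E_L^K E_N^K = E_{N\cap L \text{ appropriately}}$, which is symmetric).

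The main obstacle I anticipate is the multiplicativity/intermediate-subfactor step: I need $L \subseteq M_0 \subseteq M$ with $[M_0:L] = [M:L]$ finite to conclude $M_0 = M$, and this requires knowing $M_0$ is a von Neumann algebra of finite index in $M$ and that the ``tower law'' $[M:L] = [M:M_0]\,[M_0:L]$ holds. Finiteness of $[M:M_0]$ is automatic once $[M_0:L] < \infty$ and $[M:L] < \infty$, but the tower law for non-extremal inclusions needs care; here we are given all inclusions are extremal of finite index, so I would cite the standard multiplicativity of the minimal index (e.g. Jones's original argument for the trace-preserving conditional expectation, valid since the restriction of the trace is the Markov/minimal trace in the extremal case). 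The other slightly delicate point is verifying that the Pimsner--Popa basis of $K$ over $N$ genuinely transports to one of $M_0$ over $L$ using only the commuting square relation $E_N^K = E_L^K$ on ... wait, $E_N^K|_M = E_L^M$, together with non-degeneracy-free manipulation — this is a finite, pictorial-free computation but it is the technical heart, and I would present it as the one displayed lemma in the proof.
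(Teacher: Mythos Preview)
Your core idea coincides with the paper's: transport a Pimsner--Popa basis $\{\lambda_i\}$ for $K/N$ across the commuting square using $E^M_L(\lambda_i\lambda_j^*) = E^M_L E^M_K(\lambda_i\lambda_j^*) = E^M_N(\lambda_i\lambda_j^*) = E^K_N(\lambda_i\lambda_j^*)$, so that the basis matrix $q(M,L,\Lambda)$ equals $q(K,N,\Lambda)$. The difference is in what you do next. The paper observes that this projection has normalized trace $[K:N]/n = [M:L]/n$, which is exactly the criterion for $\{\lambda_i\}$ to be a Pimsner--Popa basis for \emph{all of} $M$ over $L$; the reconstruction formula $x = \sum_i \lambda_i\, E^M_L(\lambda_i^* x)$ then exhibits every $x\in M$ as a norm-limit of elements of $\mathrm{span}\,KL$, and non-degeneracy follows (the paper cites \cite{Ppa1994} for this last step). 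Your route instead introduces an intermediate factor $M_0$ generated by $K$ and $L$, argues $[M_0:L]=[K:N]$, and then invokes multiplicativity of the index to force $M_0 = M$. This detour is unnecessary: the equal-index hypothesis is precisely what upgrades ``basis for $M_0/L$'' to ``basis for $M/L$'' in one line, so you never need to name $M_0$ or worry about the tower law. Your concern about extremality is also moot, since the direct argument uses only the trace of the projection matrix, and in any case multiplicativity of the Jones index for $II_1$ factors does not require extremality.

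In short: correct strategy, but you are working harder than needed. The one computation you flag as ``the technical heart'' (basis transport via the commuting square) is indeed the whole proof; once it is done, non-degeneracy is immediate without any intermediate-subfactor or index-multiplicativity argument.
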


\begin{proof}
Suppose that $\Lambda:= \{\lambda_i:i\in I=\{1,2,\cdots,n\}\}$ is a Pimsner-Popa basis for $K/N$. Thus, the matrix 
$q(K,N,\Lambda):=((q_{ij}))$, where $ q_{ij}= E^{K}_{N}(\lambda_i{\lambda}^*_j) ~~~\forall~~~ i,j
$, is a projection in $M_{n}(N)$ such that $tr(q(K,N,\Lambda)) = \frac{[K:N]}{n}$. Since by assumption $\mathcal C $ is a commuting square, we see that $E^{M}_{L}(\lambda_i{\lambda}^*_j)= E^{M}_{L} E^{M}_{K}(\lambda_i{\lambda}^*_j)= E^{M}_{N}(\lambda_i{\lambda}^*_j)= q_{ij}.$
Therefore, $q(M,L,\Lambda) = q(K,N,\Lambda)$ and further $tr (q(M,L,\Lambda)) = tr(q(K,N,\Lambda))= \frac{[K:N]}{n}= \frac{[M:L]}{n}$. This proves that $\{\lambda_i:i\in I\}$ is also 
a basis for $M/L.$ Now the non-degeneracy of ${\mathcal C}$ follows from \cite{Ppa1994}. This completes the proof.
\end{proof}
We are now ready to deduce the main result of this section.
\begin{theorem}
 Suppose $Q$ is a $*$-planar subalgebra of the subfactor planar algebra $P$  (of modulus $\delta>1)$. Then there exists a smooth non-degenerate commuting square of type $II_1$-factors:
 \[\begin{array}{ccc}
  M^{}_0(P) & \subseteq &  M^{}_1(P)\\
\rotatebox{90}{$\subseteq$} & & \rotatebox{90}{$\subseteq$}\\
  M^{}_0(Q) & \subseteq & M^{}_1(Q)
\end{array}\]  such that planar algebra of $M^{}_0(P) \subseteq  M^{}_1(P)$ is isomorphic to $P$ and 
the planar algebra of $M^{}_0(Q) \subseteq M^{}_1(Q)$ is isomorphic to $Q$.
\end{theorem}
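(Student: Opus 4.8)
The plan is to assemble the final theorem from the machinery built up in Sections 4 and 5, applied to the specific commuting square of finite pre-von Neumann algebras $\mathscr{F}$ of Theorem \ref{f}. First I would invoke Theorem \ref{f} to get that
\[\begin{array}{ccc}
  F_0(P) & \subseteq &  F_1(P)\\
\rotatebox{90}{$\subseteq$} & & \rotatebox{90}{$\subseteq$}\\
  F_0(Q) & \subseteq & F_1(Q)
\end{array}\]
is a commuting square of finite pre-von Neumann algebras, and then apply Theorem \ref{lambda} to conclude that the completed quadruple
\[\begin{array}{ccc}
  M^{}_0(P) & \subseteq &  M^{}_1(P)\\
\rotatebox{90}{$\subseteq$} & & \rotatebox{90}{$\subseteq$}\\
  M^{}_0(Q) & \subseteq & M^{}_1(Q)
\end{array}\]
is a commuting square of von Neumann algebras, with inclusions and conditional expectations as in Theorem \ref{t1}. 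By Theorem \ref{gjsks} applied to $P$ and to $Q$ separately, the inclusions $M^{}_0(P) \subseteq M^{}_1(P)$ and $M^{}_0(Q) \subseteq M^{}_1(Q)$ are extremal finite-index subfactors of index $\delta^2$, with planar algebras isomorphic to $P$ and $Q$ respectively; in particular all four algebras in the square are $II_1$-factors. This already gives a commuting square of $II_1$-factors with the stated planar algebra identifications.

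Next I would establish non-degeneracy. Since $[M^{}_1(P):M^{}_0(P)] = \delta^2 = [M^{}_1(Q):M^{}_0(Q)]$ by Theorem \ref{gjsks}, Lemma \ref{sym} applies directly: a commuting square of $II_1$-factors in which the two vertical indices agree is automatically non-degenerate. (One should double-check that Lemma \ref{sym} is stated for the square with the small factor in the top-left corner, matching our situation: here $N = M^{}_0(Q)$, $L = M^{}_0(P)$, $M = M^{}_1(P)$, $K = M^{}_1(Q)$, and indeed $[M^{}_1(Q):M^{}_0(Q)] = [M^{}_1(P):M^{}_0(P)]$.)

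Finally I would verify smoothness, i.e. that $M^{}_0(Q)^\prime \cap M^{}_k(Q) \subseteq M^{}_0(P)^\prime \cap M^{}_k(P)$ for all $k \geq 0$, where $M^{}_k(Q) \subseteq M^{}_k(P)$ via the iterated basic construction. The key point is that the relative commutant $M^{}_0(Q)^\prime \cap M^{}_k(Q)$ is precisely the $k$-box space $Q_k$ of the planar algebra $Q$ (by the planar algebra identification in Theorem \ref{gjsks}), and likewise $M^{}_0(P)^\prime \cap M^{}_k(P) = P_k$; and $Q_k \subseteq P_k$ is exactly the statement that $Q$ is a planar subalgebra of $P$. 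So smoothness amounts to checking that this planar-algebraic inclusion $Q_k \hookrightarrow P_k$ is compatible with the operator-algebraic inclusion $M^{}_k(Q) \hookrightarrow M^{}_k(P)$ coming from the GJS construction — that is, that the abstract box-space inclusion and the concrete relative-commutant inclusion agree. I expect this compatibility to be the main obstacle: it requires tracking how the GJS identification of $P_k$ (resp. $Q_k$) with the relative commutant is built, and checking that the inclusion $F_k(Q) \subseteq F_k(P)$ and its completion $M^{}_k(Q) \subseteq M^{}_k(P)$ restrict correctly on these box spaces. One natural route is to show that the whole tower $M^{}_k(Q) \subseteq M^{}_k(P)$ is itself the iterated basic construction of a single commuting square (obtained by completing the evident commuting squares of the $F_k$'s, using Theorem \ref{lambda} again at each level), so that the standard invariant inclusions are built in by construction; then smoothness is immediate from $Q$ being a planar subalgebra of $P$. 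I would then conclude by reconciling this with Theorem \ref{gjsks} to read off the planar algebras of the two diagonal subfactors as $P$ and $Q$.
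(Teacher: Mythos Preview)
Your proposal is correct and follows essentially the same route as the paper: Theorem~\ref{f} gives the pre-von Neumann commuting square $\mathscr{F}$, Theorem~\ref{lambda} completes it to a von Neumann commuting square, Theorem~\ref{gjsks} identifies the planar algebras and the equal indices $\delta^2$, Lemma~\ref{sym} yields non-degeneracy, and smoothness comes from $Q_k \subseteq P_k$. The only divergence is that you anticipate a compatibility obstacle in the smoothness step; the paper dispatches this in one line by citing Proposition~5.2 of \cite{KdySnd2009} for the identifications $(M_0(Q))' \cap M_k(Q) = Q_k$ and $(M_0(P))' \cap M_k(P) = P_k$, without the additional tower-comparison argument you sketch.
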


\begin{proof}
 By Theorem \ref{f} it follows that the quadruple $\mathscr{F}$ defined as follows
 $$\begin{matrix}
F_0(P) &\subseteq & F_1(P)\cr
\rotatebox{90}{$\subseteq$} &\ &\rotatebox{90}{$\subseteq$}\cr
F_0(Q)&\subseteq & F_1(Q)
\end{matrix}$$ is a commuting square of finite pre-von Neumann algebras.
 Next, apply Theorem \ref{lambda} to obtain a commuting square $\mathscr{G}$ of type $II_1$ factors as follows:
 \[\begin{array}{ccc}
  M^{}_0(P) & \subseteq &  M^{}_1(P)\\
\rotatebox{90}{$\subseteq$} & & \rotatebox{90}{$\subseteq$}\\
  M^{}_0(Q) & \subseteq & M^{}_1(Q).
\end{array}\]

\noindent But by Theorem \ref{gjsks} we know that the planar algebra of the extremal subfactor $ M^{}_0(P)  \subseteq M^{}_1(P)$ is isomorphic to $P$ and is of index
${\delta}^2$. Similarly,  the planar algebra of $M^{}_0(Q) \subseteq M^{}_1(Q)$ is isomorphic to $Q$ and is also of index ${\delta}^2$ (since $Q$ is a planar subalgebra of $P$).
Therefore, $[M^{}_1(P):M^{}_0(P)]= [M^{}_1(Q):M^{}_0(Q)]= {\delta}^2$.
Then we apply Lemma \ref{sym} to conclude that $\mathscr{G}$ is a non-degenerate commuting square.
Finally recall that (see for example Proposition 5.2 in \cite{KdySnd2009}) 
$\big(M^{}_0(Q)\big)^{\prime}\cap M^{}_k(Q)= Q_k \subseteq P_k = \big(M^{}_0(P)\big)^{\prime}\cap M^{}_k(P) $. Thus $\mathscr{G}$ is a smooth non-degenerate commuting square.
This completes the proof.
\end{proof}


\begin{thebibliography}{10}

\bibitem[GdmHrpJns1989]{GdmHrpJns1989}
Frederick~M. Goodman, Pierre de~la Harpe, and Vaughan F.~R. Jones.
\newblock {\em Coxeter graphs and towers of algebras}, volume~14 of {\em
  Mathematical Sciences Research Institute Publications}.
\newblock Springer-Verlag, New York, 1989.

\bibitem[GnnJnsShl2010]{GnnJnsShl2010}
A.~Guionnet, V.~F.~R. Jones, and D.~Shlyakhtenko.
\newblock Random matrices, free probability, planar algebras and subfactors.
\newblock In {\em Quanta of maths}, volume~11 of {\em Clay Math. Proc.}, pages
  201--239. Amer. Math. Soc., Providence, RI, 2010.
  
\bibitem[Jns1999]{Jns1999} V. F. R. Jones,
{\it Planar algebras},
arXiv:math/9909027.


\bibitem[JnsShlWlk2010]{JnsShlWlk2010}
Vaughan Jones, Dimitri Shlyakhtenko, and Kevin Walker.
\newblock An orthogonal approach to the subfactor of a planar algebra.
\newblock {\em Pacific J. Math.}, 246(1):187--197, 2010.

\bibitem[KdySnd2004]{KdySnd2004}
Vijay Kodiyalam and V.~S. Sunder.
\newblock On Jones' planar algebras.
\newblock {\em J. Knot Theory and its Ramifications.}, 13(2):219--248, 2004.

\bibitem[KdySnd2009]{KdySnd2009}
Vijay Kodiyalam and V.~S. Sunder.
\newblock From subfactor planar algebras to subfactors.
\newblock {\em Internat. J. Math.}, 20(10):1207--1231, 2009.

\bibitem[Ppa1994]{Ppa1994}
Sorin Popa.
\newblock Classification of amenable subfactors of type {II}.
\newblock {\em Acta Math.}, 172(2):163--255, 1994.

\bibitem[Ppa1995]{Ppa1995}
Sorin Popa.
\newblock An axiomatization of the lattice of higher relative commutants of a
  subfactor.
\newblock {\em Invent. Math.}, 120(3):427--445, 1995.

\bibitem[Ppa2002]{Ppa2002}
Sorin Popa.
\newblock Universal construction of subfactors.
\newblock {\em J. Reine Angew. Math.}, 543:39--81, 2002.

\bibitem[SnoWtn1994]{SnoWtn1994}
Takashi Sano and Yasuo Watatani \newblock Angles between two subfactors.
\newblock {\em J. Operator Theory}, 32(2):209--241, 2002.

\bibitem[Mgk1954]{Mgk1954}
Hisaharu Umegaki.
\newblock Conditional expectation in an operator algebra.
\newblock {\em T\^ohoku Math. J. (2)}, 6:177--181, 1954.

\end{thebibliography}
\end{document}